\def \de {\partial}
\def \e {\varepsilon}
\def \O {\Omega}
\def \o {\omega}
\def \phi {\varphi}
\def \RN {\mathbb{R}^N}
\def \SN {S^{N-1}}
\def \SNp {S^{N-1}_+}
\def \Speta {S^{N-1}_+(\eta)}
\def \R {\mathbb{R}}
\def \div {\text{div}}
\def \Swr {S_{\omega, R}}
\def \Sw {\Sigma_\omega}
\def \Swp {\Sigma_{\omega'}}
\def \Swpn {\Sigma_{\omega_n'}}
\def \Wup {W_0^{1,p}(\O\cup\Gamma_1)}
\def \Wud {W_0^{1,2}(\O\cup\Gamma_1)}
\def \TwO {\mathcal{T}_\o(\O)}
\def \Petar {\Pi_+(\eta,r)}
\newtheorem{theorem}{Theorem}[section]
\newtheorem{lemma}[theorem]{Lemma}
\newtheorem{proposition}[theorem]{Proposition}
\newtheorem{corollary}[theorem]{Corollary}
\newtheorem*{theoPt1}{Theorem A [Theorem 1.1 in (\cite{PT})]}
\newtheorem{remark}[theorem]{Remark}
\theoremstyle{definition}
\newtheorem{definition}[theorem]{Definition}
\numberwithin{equation}{section}
\begin{document}

\title[Isoperimetric cones and minimal solutions of partial overdetermined problems]{Isoperimetric cones and\\ minimal solutions of partial overdetermined problems}
\author[F. Pacella]{Filomena Pacella}
\address{Dipartimento di Matematica, Sapienza Universit\`a di Roma, P.le Aldo Moro 5 - 00185 Roma, Italy.
        }
 \email{pacella@mat.uniroma1.it}

\author[G. Tralli]{Giulio Tralli}
\address{Dipartimento d'Ingegneria Civile e Ambientale (DICEA), Universit\`a di Padova, Via Marzolo 9 - 35131 Padova,  Italy.
         }
 \email{giulio.tralli@unipd.it}


\date{}

\begin{abstract}
In this paper we consider a partial overdetermined mixed boundary value problem in domains inside a cone as in \cite{PT}. We show that in cones having an isoperimetric property the only domains which admit a solution and which minimize a torsional energy functional are spherical sectors centered at the vertex of the cone. We also show that cones close in the $C^{1,1}$-metric to an isoperimetric one are also isoperimetric, generalizing so a result of \cite{BF}. This is achieved by using a characterization of constant mean curvature polar graphs in cones which improves a result of \cite{PT}.
\end{abstract}
\maketitle

\section{Introduction}\label{intro}

In this paper we complement and extend some results recently obtained in \cite{PT} about partially overdetermined problems in bounded domains in cones and about constant mean curvature surfaces in cones satisfying suitable gluing conditions.\\
Let $\omega$ be an open connected set on the unit sphere $\SN$, $N\geq 2$, and let us denote by $\Sw$ the open cone in $\RN$ with vertex at the origin $O$ given by
$$\Sw=\{tx\,:\,x\in\omega,\,\,t\in(0,+\infty)\}.$$
We will assume that $\de\Sw$ is Lipschitz-continuous. As in \cite{PT} we consider a sector-like domain in $\Sw$ which is a bounded domain $\O\subset\Sw$ whose boundary is Lipschitz-continuous and is given by
$$\de\O=\Gamma\cup\Gamma_1\cup\de\Gamma$$
where $\Gamma$ is the relative (to $\Sw$) boundary, i.e. $\Gamma$ is the part of $\de\O$ which is contained in $\Sw$, $\Gamma_1=\de\O\smallsetminus\overline{\Gamma}$ and $\de\Gamma=\de\Gamma_1=\overline{\Gamma}\cap\overline{\Gamma}_1$. We require that $\textsc{H}_{N-1}(\Gamma)>0$, $\textsc{H}_{N-1}(\Gamma_1)>0$ where $\textsc{H}_{N-1}(\cdot)$ denotes the $(N-1)$-dimensional Hausdorff measure.\\
Particular cases of sector-like domains are the spherical sectors centered at the vertex of the cone, we denote them by $\Swr$ i.e. 
$$\Swr=B_R\cap\Sw,\qquad R>0$$
where $B_R$ is the ball of radius $R$ centered at the origin.\\
Then we consider the partially overdetermined problem
\begin{equation}\label{serrintype}
\begin{cases}
   -\Delta u=1 & \mbox{ in }\O, \\
   u=0 & \mbox{ on }\Gamma,\\
	 \frac{\de u}{\de\nu}=-c<0 & \mbox{ on }\Gamma,\\
	\frac{\de u}{\de\nu}=0 & \mbox{ on }\Gamma_1\smallsetminus\{O\},
\end{cases}
\end{equation}
where $\nu$ denotes the exterior unit normal to $\de\O$ whenever is defined. We will sometimes write $\nu_x$ for $x\in\de\Gamma$, meaning that $\nu_x$ is the normal to $\overline{\Gamma}$.\\
If $\de\Sw\smallsetminus\{O\}$ and $\Gamma$ are smooth hypersurfaces, then the following result is proved in \cite{PT}
\begin{theoPt1}
Let $c>0$ be fixed, and consider a convex cone $\Sw$ such that $\de\Sw\smallsetminus\{O\}$ is smooth. Assume that $\O$ is a sector-like domain having a smooth relative boundary $\Gamma$ with smooth $\de\Gamma\subset \de\Sw\smallsetminus\{O\}$. If there exists a classical $C^2(\O)\cap C^1(\Gamma\cup \Gamma_1\smallsetminus\{O\})$-solution $u$ of problem \eqref{serrintype} such that $u\in W^{1,\infty}(\O) \cap W^{2,2}(\O)$, then
$$\O=\Sw\cap B_{R}(p_0),\quad \mbox{ and }\quad u(x)=\frac{N^2c^2-|x-p_0|^2}{2N},$$
where $B_{R}(p_0)$ denotes the ball centered at a point $p_0\in\RN$ and radius $R=Nc$.\\
Moreover, one of the following two possibilities holds:
\begin{itemize}
\item[(i)] $p_0=O$;
\item[(ii)] $p_0\in\de\Sw$ and $\Gamma$ is a half-sphere lying over a flat portion of $\de\Sw$.
\end{itemize}
\end{theoPt1}
Let us observe that the hypothesis that the solution $u$ belongs to $W^{1,\infty}(\O) \cap W^{2,2}(\O)$ is automatically satisfied when $\Gamma$ and $\Gamma_1$ intersect orthogonally, as proved in \cite{PT}. We also refer the reader to the recent works \cite{CR, GX, LS}.\\
The previous theorem gives a characterization of sector-like domains $\O$ in which a solution of the partially overdetermined problem \eqref{serrintype} exists. The claim is that either $\O$ is a spherical sector centered at the vertex of the cone or is a half ball centered at a point $\de\Sw\smallsetminus\{O\}$. The last case can happen only when $\de\Sw$ has a flat portion. One of the aims of this paper is to show a connection between problem \eqref{serrintype} and a suitable torsional energy function $\TwO$ that can be defined for sector-like domains (see \eqref{defT}). We prove that in any smooth cone the domains $\O$ which are stationary for $\TwO$, under a volume constraint, are the ones for which \eqref{serrintype} admits a weak solution (see Proposition \ref{domder}). Consequently, if the cone $\Sw$ is convex the stationary sector-like domains which are smooth enough can be characterized by Theorem $A$.\\
Moreover we show that a conical version of the classical Saint-Venant principle (see, e.g., \cite{Ke}) holds. More precisely we prove in Theorem \ref{saintV} that if the cone has an isoperimetric property (but is neither necessarily convex nor smooth) then the only sector-like domains which minimize $\TwO$ under a volume constraint are the spherical sectors $\Swr$. These results are analogous to those holding for the classical torsional rigidity problem (see \cite{PS}). The proof of Theorem \ref{saintV} is easily obtained by using the $\omega$-symmetrization which is well defined in isoperimetric cones (see Section \ref{sec3}). This implies that Theorem A can be extended to the class of isoperimetric cones (which not only includes convex cones, see Section \ref{sec3}) relatively to the characterization of the sector-like domains $\O$ which admit a solution of problem \eqref{serrintype} and which also minimize the functional $\TwO$. In this case, up to rescaling, the only domain is the spherical sector $S_{\omega, 1}$, i.e. the alternative $ii)$ of Theorem $A$ does not hold.\\
By an isoperimetric cone we mean a cone which has the property that the only sets which minimize the relative (to $\Sw$) perimeter under a volume constraint are the spherical sectors $\Swr$ (see Definition \ref{defisocone}). It was proved in \cite{LP} that any smooth convex cone is isoperimetric (see also \cite{CRS, FI, RitRos}).\\Recently Baer and Figalli (\cite{BF}) have extended the isoperimetric property to almost convex cones satisfying an uniform $C^{1,1}$ condition.\\
Here we generalize the result of \cite{BF} by proving that the convexity of the cone is not needed in the sense that any cone close to an isoperimetric cone is also isoperimetric (see Theorem \ref{thsect3}). In other words the set of the isoperimetric cones is open with respect to the $C^{1,1}$-distance on $\SN$.\\
Moreover we shorten considerably the proof given in \cite{BF}. Indeed the proof of Theorem 1.2 in \cite{BF} is made in two steps. The first one consists in showing that in the almost convex cones the relative boundary of the minimizers are $C^1$-polar graphs. Then the second step aims to prove that if the relative boundary is a polar graph then the minimizer is actually a spherical sector. The second step is achieved by means of a refined Poincar\'e inequality obtained in convex cones and is the longest part of the proof. To prove Theorem \ref{thsect3} we observe that, in order to reduce to consider only minimizers whose boundary is a polar graph, the convexity of the limit cone is not needed but is enough to require it to have the isoperimetric property (see the details in Section \ref{sec3}). Then we just use the characterization of constant mean curvature polar graphs (or equivalently strictly starshaped hypersurfaces) intersecting any cone orthogonally provided by Theorem \ref{prth1} below to conclude that in almost isoperimetric cones the only minimizers are spherical sectors.

\begin{theorem}\label{prth1}
Let $\Sw$ be any cone in $\RN$ such that $\omega$ is strictly contained in $\SNp=\{x=(x_1,\ldots,x_N)\,:\,x_N>0,\,|x|=1\}$ and it has $C^{1,1}$-smooth boundary. Let $\Gamma\subset \Sw$ be a smooth $(N-1)$-dimensional manifold which is relatively open, bounded, orientable, connected and with $C^{1,1}$-smooth boundary $\de\Gamma$ contained in $\de\Sw\smallsetminus\{O\}$. Assume that $\Gamma$ and $\de\Sw$ intersect orthogonally at the points of $\de \Gamma$ and that the mean curvature of $\Gamma$ is a constant $H>0$. If $\Gamma$ is strictly starshaped with respect to $O$, then $\Gamma$ is the (relative to $\Sw$)-boundary of the spherical sector $S_{\omega, \frac{1}{H}}$.
\end{theorem}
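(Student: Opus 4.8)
The plan is to represent $\Gamma$ as a polar graph, squeeze out two global identities that force a sharp relation between $|\Gamma|$ and the volume it encloses, and then close the argument with a one–sided pointwise bound on the graphing function; convexity of the cone will play no role.

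First I would write $\Gamma=\{\rho(\theta)\theta\,:\,\theta\in\overline{\omega}\}$, where strict starshapedness makes $\rho$ a positive function, smooth on $\omega$ and $C^{1,1}$ up to $\de\omega$; the hypotheses that $\omega\subset\subset\SNp$ and that $\de\Sw\setminus\{O\}$ and $\de\Gamma$ are $C^{1,1}$ guarantee a clean picture: $\de\Sw\setminus\{O\}$ is a $C^{1,1}$ hypersurface carrying a well-defined outer unit normal $\nu_{\de\Sw}$, and $\Gamma$ bounds, together with a relatively open piece $\Gamma_1\subset\de\Sw$, a sector-like domain $\O$ with Lipschitz boundary. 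Two elementary observations on $\de\Gamma$ are the geometric heart of the matter: (a) along $\de\Sw$ the position vector $x$ is tangent to the cone, so $\la x,\nu_{\de\Sw}\ra=0$; (b) if $\Gamma$ meets $\de\Sw$ orthogonally at $p\in\de\Gamma$, then the outer unit conormal $\mu$ of $\de\Gamma$ in $\Gamma$ equals $\pm\nu_{\de\Sw}(p)$, since $\nu_{\de\Sw}\in T_p\Gamma$, $\nu_{\de\Sw}\perp T_p\de\Gamma$ and $|\nu_{\de\Sw}|=1$. Combining (a)--(b), $\la x,\mu\ra=0$ on $\de\Gamma$.

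Next I would run the first Hsiung--Minkowski identity on $\Gamma$. With $\nu$ the outer unit normal of $\Gamma$ (pointing away from $O$) and $x=x^{T}+\la x,\nu\ra\nu$, the tangential divergence theorem gives $\int_\Gamma \div_\Gamma(x^{T})\,dS=\int_{\de\Gamma}\la x,\mu\ra\,d\sigma=0$; since $\div_\Gamma(x^{T})=(N-1)\bigl(1-H\la x,\nu\ra\bigr)$ — using that the mean curvature is the constant $H$, normalized so that $(N-1)H=\div_\Gamma\nu$, which is the convention making the cap $S_{\omega,1/H}$ have mean curvature $H$ — we get $|\Gamma|=H\int_\Gamma\la x,\nu\ra\,dS$. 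Applying the divergence theorem to the field $x$ on $\O$ and using $\la x,\nu\ra=0$ on the conical part $\Gamma_1$ gives $\int_\Gamma\la x,\nu\ra\,dS=N|\O|$. Hence the balancing identity $|\Gamma|=NH|\O|$. Then I would establish the one-sided bound $\rho\le 1/H$ on $\overline{\omega}$: at a point $\theta_M$ where $\rho$ attains its maximum $R=\rho(\theta_M)$ (interior to $\omega$ or on $\de\omega$ alike) one has $\Gamma\subset\overline{B_R}$ and $\Gamma$ is tangent to $\de B_R$ at $p=R\theta_M$, so a second-order comparison along curves of $\Gamma$ through $p$ shows every principal curvature of $\Gamma$ at $p$, measured with the normal pointing toward $O$, is $\ge 1/R$; thus $H\ge 1/R$, i.e.\ $\rho(\theta_M)\le 1/H$. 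To conclude I would write, in polar coordinates, $|\O|=\frac1N\int_\omega\rho^{N}\,d\theta$ and $|\Gamma|=\int_\omega\rho^{N-2}\sqrt{\rho^{2}+|\nabla_{\SN}\rho|^{2}}\,d\theta\ge\int_\omega\rho^{N-1}\,d\theta$; inserting these into $|\Gamma|=NH|\O|$ yields $\int_\omega\rho^{N-1}(H\rho-1)\,d\theta\ge 0$, while $\rho\le 1/H$ forces the integrand to be $\le 0$ pointwise. Therefore $\rho^{N-1}(H\rho-1)\equiv 0$, i.e.\ $\rho\equiv 1/H$, so $\Gamma$ is the relative boundary of $S_{\omega,1/H}$.

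The step I expect to be the main obstacle — indeed the only genuinely geometric input — is the vanishing of the boundary term $\int_{\de\Gamma}\la x,\mu\ra\,d\sigma$: this is exactly where the orthogonal contact and the \emph{conical} structure of $\de\Sw$ (rather than any convexity, which \cite{PT} and \cite{BF} rely on) are used, and making it rigorous requires care with the $C^{1,1}$ regularity of $\de\Gamma\subset\de\Sw\setminus\{O\}$ and with the definition of $\mu$ and $\nu_{\de\Sw}$ there. Secondary technical points are the regularity up to $\de\Gamma$ of the graphing function $\rho$ solving the CMC equation, and the validity of the divergence theorem on the Lipschitz sector-like domain $\O$ (whose boundary contains the vertex $O$ only as a single point).
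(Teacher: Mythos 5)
Your first half is sound and coincides with the paper's own starting point: the cone property $\la x,\nu^{\de\Sw}\ra=0$ plus orthogonality makes the conormal boundary term vanish, the tangential divergence theorem gives the first Minkowski identity $|\Gamma|=H\int_\Gamma\la x,\nu\ra\,{\rm d}\sigma$, and the divergence theorem in $\O$ gives $\int_\Gamma\la x,\nu\ra\,{\rm d}\sigma=N|\O|$; in the paper this identity $|\Gamma|=NH|\O|$ is used only to see that $H>0$. The genuine gap is your pointwise bound $\rho\le 1/H$, and it fails for a reason of direction, not of technique. At the maximum point $p=R\theta_M$, $R=\max\rho$, the surface $\Gamma$ lies inside $\overline{B_R}$ and is internally tangent to $\de B_R$ (note in passing that first-order tangency when $\theta_M\in\de\omega$ already needs the Neumann condition on $\rho$ coming from orthogonality, which you did not address). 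The standard comparison then says that, with respect to the outward normal --- the orientation for which the cap $S_{\omega,1/H}$ has mean curvature $H>0$ and for which your Minkowski identity is written --- every principal curvature of $\Gamma$ at $p$ is \emph{at least} $1/R$, hence $H\ge 1/R$, i.e.\ $R=\max\rho\ge 1/H$. This is the opposite of what you need (your step ``$H\ge 1/R$, i.e.\ $\rho(\theta_M)\le 1/H$'' reverses the inequality). The same argument at the minimum point only gives $\min\rho\le 1/H$. So the touching-sphere comparison yields $\min\rho\le 1/H\le\max\rho$, which has no rigidity content, and your final squeeze $\int_\omega\rho^{N-1}(H\rho-1)\,{\rm d}\theta\ge0$ with a sign-definite integrand collapses.

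The gap is not repairable by a local barrier: the identity $|\Gamma|=NH|\O|$ holds for \emph{every} CMC hypersurface meeting the cone orthogonally, so all the rigidity must come from elsewhere, and a global bound $\rho\le 1/H$ is essentially as strong as the theorem itself. The paper's proof supplies the missing input through a second integral identity, the second Minkowski formula \eqref{dueM} of \cite{CP}, valid under the orthogonality assumption; combining \eqref{unoM} and \eqref{dueM} with the pointwise inequality $\sigma_2(h)\le H^2$ and with strict starshapedness $\la x,\nu^\Gamma\ra>0$ forces equality in the arithmetic--geometric inequality, hence umbilicality, so $\Gamma=\de B_{1/H}(p_0)\cap\Sw$; the center is then identified as the vertex via Proposition \ref{eccop0perpolar}, which is exactly where the polar-graph structure and $\omega\subset\subset\SNp$ are used. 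To salvage your scheme you would have to replace the false pointwise bound by an ingredient of this type (a second Minkowski/Heintze--Karcher-type inequality), not by tangent spheres at extrema of $\rho$.
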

This theorem is an improvement of Theorem 6.4 in \cite{PT} which, in turn, is a particular case of Theorem 1.3 of \cite{PT} where a more general gluing condition between $\Gamma$ and $\Sw$ is assumed. The differences between Theorem \ref{prth1} and Theorem 6.4 of \cite{PT} rely on the regularity assumptions on $\omega$ and on the fact that in \cite{PT} it was proved that $\Gamma=\de B_{\frac{1}{H}}(p_0)\cap\Sw$ for some $p_0\in\de\Sw$, but we could not claim that $p_0$ is actually the vertex of the cone, while this is asserted in Theorem \ref{prth1}. The problem of identifying the center of the sphere on which $\Gamma$ lies is studied in Section \ref{sec2}. We also observe inside the proof of Theorem \ref{prth1} that $H>0$ is a necessary condition, and hence it is not really an hypothesis.

The paper is organized as follows. In Section \ref{sec2} we prove Propositions \ref{lemmaconvexstr} and \ref{eccop0perpolar}, together with other geometric properties needed for the proof of Theorem \ref{prth1}. In Section \ref{sec3} we define the isoperimetric cones and prove in Theorem \ref{thsect3} the generalization of the result of \cite{BF}. In the same section we recall the $\omega$-symmetrization in isoperimetric cones and show the analogous of the Polya-Szego inequality with the characterization of the equality case. Finally in Section \ref{sec4} we study the torsional energy functional and prove the characterization of its stationary points, as well as the Saint-Venant type principle in isoperimetric cones.

\section{Some geometric results}\label{sec2}

In this section, to the aim of proving Theorem \ref{prth1}, we study the following geometrical question:
\begin{center}
\vskip 0.3cm
\noindent\emph{Let $\Sw$ be a cone in $\RN$, $N\geq3$, and assume that $\Gamma$ is a portion of a sphere inside $\Sw$, centered at a point $p_0\in\RN$, i.e. $\Gamma=\de B_{R}(p_0)\cap\Sw$. Assume further that $\Gamma$ and $\de\Sw$ intersect orthogonally at every point of $\de\Gamma\cap\de^*\Sw$, where $\de^*\Sw$ denotes the set of regular points of $\de\Sw$. Can we claim that $p_0$ must be $O$, i.e. the vertex of the cone?}
\end{center}
\vskip 0.3cm
\noindent In the paper \cite{RitRos} it is proved that if $\Sw$ is a smooth convex cone then either $p_0=O$, or $p_0\in \de\Sw\smallsetminus\{O\}$ and $\Gamma$ is a half-sphere lying over a flat portion of $\de\Sw$. Thus, in particular, if the cone is strictly convex, the answer to the question is affirmative.\\
However, this cannot be used in Theorem \ref{prth1} since no convexity assumptions on the cone are made. In Proposition \ref{lemmaconvexstr} we prove, in particular, that it is enough to have a point $\bar{x}\in\de\Gamma$ of strict convexity for $\de\Sw$ to get that the center $p_0$ of the sphere is the vertex of the cone. Moreover, if $\Gamma$ is a polar graph on a $C^1$-domain $\omega$, we prove in Proposition \ref{eccop0perpolar} that $p_0=O$ unless $\omega$ is an half-sphere (i.e. $\Sw$ is an half-plane).

We start by fixing some notations. For a cone $\Sw$ we denote by $\de^*\Sw$ the set of smooth points of $\de\Sw$, i.e. points where $\de\Sw$ is of class $C^1$. In particular we have $\de^*\Sw=\de\Sw\smallsetminus\{O\}$ if $\o$ is a $C^1$-domain. We note that $x\in \de^*\Sw$ iff $\lambda x\in\de^*\Sw$ for every $\lambda >0$. Moreover, whenever $M$ is a manifold locally $C^1$ around $x\in M$, we denote by $T_x M$ the tangent space of $M$ at $x$. If $M$ is of codimension 1, we denote by $\nu_x^M$ a choice of the unit normal at $x$ (the outward choice, if $M$ is the boundary of a bounded set).\\
In the case $M=\de\Sw$, it is helpful to have in mind that the following facts hold true for all $x\in\de^*\Sw$:\, $T_x\de\Sw$ coincides with the affine space $x+T_x\de\Sw$, and the halfline $\left\langle x\right\rangle^+:=\{\lambda x\,:\,\lambda>0\}$ is contained in $\de\Sw\cap T_x\de\Sw$.

\begin{lemma}\label{lemmatang}
Let $\Gamma$ be the portion of a sphere inside a cone $\Sw$, i.e. $\Gamma= \de B_{r}(p_0)\cap \Sw$ where $B_R(p_0)$ denotes the ball of radius $r>0$ and center $p_0\in\RN$. If $\Gamma$ intersects $\de\Sw$ orthogonally at a point $x\in\de\Gamma \cap \de^*\Sw$, then
$$p_0\in T_x\de\Sw.$$
\end{lemma}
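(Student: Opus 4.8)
The plan is to exploit two geometric facts: that $\Gamma$ is a piece of the sphere $\partial B_r(p_0)$, so its unit normal at any point $x\in\Gamma$ is radial from $p_0$, namely $\nu_x^\Gamma=\pm\frac{x-p_0}{r}$; and that, since $x\in\de^*\Sw$, the halfline $\langle x\rangle^+=\{\lambda x:\lambda>0\}$ lies inside $\de\Sw\cap T_x\de\Sw$, so in particular the position vector $x$ itself belongs to the tangent plane $T_x\de\Sw$ (viewed as the linear subspace through the origin, which here coincides with the affine tangent plane by the conical invariance noted just before the lemma). The orthogonality hypothesis at $x\in\de\Gamma\cap\de^*\Sw$ says precisely that $\nu_x^\Gamma\in T_x\de\Sw$ and $\nu_x^{\de\Sw}\in T_x\Gamma$; I will only need the first.

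First I would write, using orthogonality, that $\nu_x^\Gamma=\frac{x-p_0}{r}\in T_x\de\Sw$. Since $T_x\de\Sw$ is a linear subspace and $x\in T_x\de\Sw$ by the conical property recalled above, subtracting gives $x-r\,\nu_x^\Gamma=p_0\in T_x\de\Sw$, which is exactly the claim. (If one prefers to keep the affine viewpoint: the affine tangent plane $x+T_x\de\Sw$ contains the line $\langle x\rangle^+$, hence contains $O$, hence equals the linear span $T_x\de\Sw$; then $x\in T_x\de\Sw$ and the same one-line computation applies.)

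In fact there is essentially no obstacle here — the statement is a direct consequence of the scaling invariance of the cone, which forces $O$ (and therefore the whole line through $x$) to lie in every tangent plane $T_x\de\Sw$ at a regular point $x\neq O$. The only point requiring a word of care is the identification between the affine tangent plane $x+T_x\de\Sw$ and the linear subspace $T_x\de\Sw$; this is precisely the fact recalled in the paragraph immediately preceding the lemma, so it may be invoked directly. Once that is in hand, the conclusion $p_0=x-r\,\nu_x^\Gamma\in T_x\de\Sw$ is immediate, and this lemma will serve as the starting point for locating $p_0$ at the vertex in Proposition \ref{lemmaconvexstr}.
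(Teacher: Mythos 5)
Your proof is correct and is essentially the paper's argument: the paper writes the orthogonality as $\left\langle \nu_x^{\overline{\Gamma}},\nu_x^{\de\Sw}\right\rangle=0$ and combines $\nu_x^{\overline{\Gamma}}=\frac{x-p_0}{r}$ with the cone identity $\left\langle x,\nu_x^{\de\Sw}\right\rangle=0$ to get $\left\langle p_0,\nu_x^{\de\Sw}\right\rangle=0$, which is exactly your computation $p_0=x-r\,\nu_x^{\Gamma}\in T_x\de\Sw$ phrased via the normal instead of the tangent plane. No issues.
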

\begin{proof}
The fact that $\Gamma$ intersects $\de \Sw$ orthogonally at $x$ says that the scalar product $\left\langle \nu_x^{\overline{\Gamma}}, \nu_x^{\de\Sw}\right\rangle=0$ where $\nu_x^{\overline{\Gamma}}$ denotes the normal to $\overline{\Gamma}$ at $x$ and $\nu_x^{\de\Sw}$ the normal to $\de\Sw$ at $x$. Since $\nu_x^{\overline{\Gamma}}=\frac{x-p_0}{r}$ because $\Gamma$ is a portion of a sphere and $\left\langle x, \nu_x^{\de\Sw}\right\rangle=0$ by the cone property, the statement readily follows.
\end{proof}

\begin{remark}\label{rempyr}
The previous lemma, although very simple, allows to locate $p_0$ in some situations. We give a couple of examples, in the case when $\Gamma$ intersects $\de\Sw$ orthogonally at every point of $\de\Gamma\cap \de^*\Sw$.\\
We can consider the case of two distinct hyperplanes intersecting on a $(N-2)$-dimensional space $l$. These define a cone $\Sigma$ for which $\de^*\Sigma=\de\Sigma\smallsetminus l$. There are two possibilities: either $\de\Gamma$ touches both the hyperplanes and then $p_0\in l$ since it must belong to the intersection of all tangent planes which are just the two given hyperplanes, or $\de\Gamma$ touches only one of the two hyperplanes and then $p_0$ lies on the same hyperplane and it is easy to see that $\Gamma$ is forced to be a half-sphere because of the orthogonality condition.\\
We can also consider the case of a pyramid-shaped cone, i.e. a cone constructed by a collection of a finite number of hyperplanes (facets) intersecting just at $O$. In such case, the previous lemma tells us that $p_0=O$ if $\de\Gamma$ touches more than two facets, otherwise the same two alternatives of the previous case arise.
\end{remark}

Denote by $\de^{*2}\Sw$ the set of points $x\in\de\Sw$ around which $\de\Sw$ is of class $C^2$. We recall that, for $x\in \de^{*2}\Sw$, the second fundamental form $h_x$ of $\de\Sw$ is the bilinear symmetric form on $T_x\de\Sw\times T_x\de\Sw$ which can be defined on a orthonormal frame $\{e_1,\ldots,e_{N-1}\}$ as
$$h_x(e_i,e_j)=\left\langle \nabla_{e_i}\nu_x^{\de\Sw},e_j\right\rangle,\qquad \mbox{for }i,j\in \{1,\ldots,N-1\}.$$
Moreover the cone has the property that $\left\langle x,\nu^{\de\Sw}_x\right\rangle=0$ for any $x\in\de^*\Sw$. Thus, the radial direction is not only a tangent direction for $\de\Sw$, but it is also a direction of complete flatness in the sense that $h_x(x,\cdot)\equiv 0$. As a matter of fact, for any tangent direction $e\in T_x\de\Sw$, we have $h_x(x,e)=-\left\langle\nu_x^{\de\Sw},\nabla_e x \right\rangle=-\left\langle \nu_x^{\de\Sw},e\right\rangle=0$.\\
If $N\geq 3$ we give the following definitions:
\begin{itemize}
\item[-] we say that a point $x\in \de^{*2}\Sw$ is \emph{transversally nondegenerate} if the quadratic form $h_x$ restricted to the tangent directions to $\de\Sw$ which are orthogonal to $x$ has all the eigenvalues different from zero. In other words, all principal curvatures of $\de\Sw$ at $x$ are non-zero except for that in the $x$-direction;
\item[-] we say that a point $x\in \de^{*2}\Sw$ is a point of strict convexity (resp. strict concavity) for $\de\Sw$ if the quadratic form $h_x$ is strictly positive (resp. strictly negative) definite when it is restricted to the tangent directions which are orthogonal to $x$.
\end{itemize}

\begin{proposition}\label{lemmaconvexstr}
Let $N\geq 3$. Consider a portion of a sphere $\Gamma= \de B_{r}(p_0)\cap \Sw$ which intersects orthogonally $\de \Sw$ at every point of $\de\Gamma \cap\de^*\Sw$. Suppose there exists a point $\bar{x}\in \de\Gamma\cap\de^{*2}\Sw$ which is transversally nondegenerate. Then
$$p_0=O.$$
\end{proposition}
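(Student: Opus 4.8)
The plan is to combine Lemma \ref{lemmatang} with a derivative (second-order) analysis of the orthogonality condition along $\de\Gamma$ near $\bar x$. By Lemma \ref{lemmatang} we already know $p_0\in T_x\de\Sw$ for every $x\in\de\Gamma\cap\de^*\Sw$; since $\de\Gamma\cap\de^{*2}\Sw$ contains $\bar x$ and is relatively open there, we have $p_0\in T_x\de\Sw$ for all $x$ in a neighbourhood of $\bar x$ in $\de\Gamma$. The idea is to differentiate the relation ``$p_0-x\perp\nu_x^{\de\Sw}$'' as $x$ moves along $\de\Gamma$: if $\gamma(s)$ is a curve in $\de\Gamma$ with $\gamma(0)=\bar x$, then $\la p_0-\gamma(s),\nu_{\gamma(s)}^{\de\Sw}\ra\equiv 0$, so differentiating gives $-\la \gamma'(s),\nu_{\gamma(s)}^{\de\Sw}\ra+\la p_0-\gamma(s),\nabla_{\gamma'(s)}\nu^{\de\Sw}\ra=0$. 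The first term vanishes (tangency of $\de\Gamma$ to $\de\Sw$), so we get $\la p_0-\bar x,\,\nabla_{e}\nu_{\bar x}^{\de\Sw}\ra=0$, i.e. $h_{\bar x}(p_0-\bar x,e)=0$ for every $e\in T_{\bar x}\de\Gamma$.

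Next I would identify the directions available. Write $w:=p_0-\bar x\in T_{\bar x}\de\Sw$. The orthogonality of $\Gamma$ and $\de\Sw$ at $\bar x$ means $\nu_{\bar x}^{\de\Sw}\in T_{\bar x}\overline\Gamma=T_{\bar x}\Gamma$, hence $\nu_{\bar x}^{\de\Sw}$ together with $T_{\bar x}\de\Gamma$ spans $T_{\bar x}\Gamma$, and $T_{\bar x}\de\Gamma=T_{\bar x}\Gamma\cap T_{\bar x}\de\Sw$ has dimension $N-2$. Also the radial direction $\bar x$ lies in $T_{\bar x}\de\Sw$; I would check whether $\bar x\in T_{\bar x}\de\Gamma$ — since $\la \bar x,\nu_{\bar x}^{\de\Sw}\ra=0$, the radial direction is in $T_{\bar x}\de\Sw$, and it is tangent to the sphere $\de B_r(p_0)$ iff $\la \bar x, w\ra=0$; in any case $h_{\bar x}(\bar x,\cdot)\equiv0$ by the cone flatness already noted in the excerpt, so the radial direction contributes nothing and I should work in the quotient by it. Decompose $w=w^\parallel + \alpha\,\bar x$ where $w^\parallel$ is $h_{\bar x}$-relevant (orthogonal to $\bar x$ within $T_{\bar x}\de\Sw$). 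From $h_{\bar x}(w,e)=0$ for all $e\in T_{\bar x}\de\Gamma$ we get $h_{\bar x}(w^\parallel,e)=0$ for such $e$. Now $w^\parallel$ itself lies in the $(N-2)$-dimensional space of tangent directions orthogonal to $\bar x$; the span of $\{e : e\in T_{\bar x}\de\Gamma\}$ projected orthogonally to $\bar x$ is at least $(N-3)$-dimensional, and $w^\parallel$ is in this space as well (since $w\in T_{\bar x}\de\Sw$ and $T_{\bar x}\de\Gamma$ together with the radial and normal directions spans). The transversal nondegeneracy says $h_{\bar x}$ is nondegenerate on this $(N-2)$-dimensional space, so a vector there which is $h_{\bar x}$-orthogonal to a hyperplane of it is forced to be either zero or the unique $h_{\bar x}$-normal to that hyperplane; I then need one more relation to kill that last possibility.

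The extra relation comes from applying the same argument to the sphere side, or equivalently from the fact that $w=p_0-\bar x$ is itself a tangent vector to $\Gamma$ at $\bar x$ when restricted appropriately — more precisely, $\nu_{\bar x}^{\overline\Gamma}=\frac{\bar x-p_0}{r}=-\frac wr$, and orthogonality gives $\la w,\nu_{\bar x}^{\de\Sw}\ra=0$ (already known), while also $w\perp T_{\bar x}\Gamma$; combining, $w$ is orthogonal to $T_{\bar x}\de\Gamma$. But we also derived $h_{\bar x}(w^\parallel,\cdot)=0$ on the $(N-2)$-dimensional transversal space minus possibly one direction. Since $w^\parallel$ lies in the transversal space and $h_{\bar x}$ is nondegenerate there, and $w^\parallel$ is $h_{\bar x}$-orthogonal to an $(N-3)$-dimensional subspace while $w^\parallel$ spans a further direction, a dimension count forces $w^\parallel$ to be $h_{\bar x}$-orthogonal to an $(N-2)$-dimensional subspace containing itself, whence $h_{\bar x}(w^\parallel,w^\parallel)=0$ together with $h_{\bar x}(w^\parallel,\cdot)=0$ on a complement — nondegeneracy then gives $w^\parallel=0$. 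Finally $w=\alpha\bar x$, so $p_0=\bar x+\alpha\bar x=(1+\alpha)\bar x$ lies on the ray through $\bar x$; but that ray is contained in $\de\Sw$, and $p_0\in T_x\de\Sw$ for a whole neighbourhood of directions $x\in\de\Gamma$ — intersecting these tangent spaces (which vary because $\de\Sw$ curves in the transversal directions at $\bar x$) pins $p_0$ down to the common point of the rays, namely $O$. The main obstacle I anticipate is the bookkeeping of dimensions in the previous paragraph — making sure that the directions $e\in T_{\bar x}\de\Gamma$, after projecting out the radial direction $\bar x$, really span a codimension-one subspace of the $(N-2)$-dimensional transversal space, and handling cleanly the degenerate sub-case where $\bar x\in T_{\bar x}\de\Gamma$ (equivalently $\la\bar x,p_0\ra=|\bar x|^2$). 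I would treat that sub-case by noting $h_{\bar x}(\bar x,\cdot)=0$ makes the radial direction harmless and the same nondegeneracy argument on the transversal quotient still applies.
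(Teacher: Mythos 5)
Your first-order computation is exactly the paper's starting point (differentiate the relation $\left\langle p_0,\nu_x^{\de\Sw}\right\rangle=0$ coming from Lemma \ref{lemmatang} and feed the result into the second fundamental form), but the proposal has two genuine gaps. First, the degenerate sub-case $\bar{x}\in T_{\bar{x}}\de\Gamma$ is not actually handled: in that case $w=p_0-\bar{x}$ lies in the transversal space and you only learn that $h_{\bar x}(w,\cdot)$ vanishes on the $(N-3)$-dimensional projection of $T_{\bar x}\de\Gamma$, which is the Euclidean orthogonal complement of $w$ there; your ``extra relation'' is the Euclidean orthogonality $w\perp T_{\bar x}\de\Gamma$, and it gives no control on $h_{\bar x}(w,w)$. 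A nondegenerate form can perfectly well vanish on the orthogonal complement of a nonzero vector (take $h$ diagonal with $w$ as an eigenvector), so the dimension count does not force $w^\parallel=0$, and the pointwise algebra at $\bar x$ cannot be rescued in this sub-case. Second, even in the good sub-case the conclusion you reach at $\bar x$ is only $p_0\in\mathbb{R}\bar{x}$, and the final step ``intersecting these tangent spaces pins $p_0$ down to $O$'' is not justified: every $T_x\de\Sw$ contains $O$, but to exclude $p_0=c\bar{x}$ with $c\neq 0$ you must show $\bar{x}\notin T_x\de\Sw$ for some nearby $x\in\de\Gamma$, and differentiating at $\bar x$ itself gives nothing because $h_{\bar x}(\bar x,\cdot)\equiv 0$.

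The paper's proof closes both gaps with one device. Step I replaces $\bar x$ by a nearby point $\tilde{x}\in\de\Gamma\cap\de^{*2}\Sw$, still transversally nondegenerate by continuity, at which the radial direction is \emph{not} tangent to $\de\Gamma$; such a point exists because otherwise $\de\Gamma$ would contain a radial straight segment, impossible since $\de\Gamma\subset\de B_r(p_0)$. Then, since $\nu^{\de\Sw}$ is constant along rays, the relation $\left\langle p_0,\nu_x^{\de\Sw}\right\rangle=0$ propagates from $\de\Gamma$ to the radial dilations $V=\{tx\}$, which by the transversality at $\tilde x$ is a relatively \emph{open} subset of $\de\Sw$. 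On that open set one can differentiate along all $N-2$ transversal principal directions (not just along $\de\Gamma$), obtaining $\lambda_j(x)\left\langle p_0,e_j\right\rangle=0$, hence $p_0\parallel x$, for every $x$ in an $(N-1)$-dimensional set of non-collinear points; this forces $p_0=O$ directly. If you want to keep your pointwise scheme, the correct repair is essentially the same: run the argument at \emph{all} points of a relative neighborhood in $\de\Gamma$ where the radial direction is transversal to $\de\Gamma$ (using the no-radial-segment observation), so that $p_0$ lies on the line through $O$ and $x$ for many non-collinear $x$ on the sphere $\de B_r(p_0)$; as written, working at the single point $\bar x$ is not enough.
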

\begin{proof} Let us split the proof in two steps, and assume the point $\bar{x}\in \de\Gamma\cap\de^{*2}\Sw$ is transversally nondegenerate.\\
\noindent{{\bf{\it Step I. }}}We claim that there exist a point $\tilde{x}\in\de\Gamma\cap\de^{*2}\Sw$ and an open neighborhood $U_{\tilde{x}}$ of $\tilde{x}$ in $\de^{*2}\Sw$ such that every point $x\in U_{\tilde{x}}$ is transversally nondegenerate and such that $V:=\{t x\,:\,x\in U_{\tilde{x}}\cap\de\Gamma,\,t\in(\frac{1}{2},\frac{3}{2})\}$ is an open neighborhood (relatively to $\de\Sw$) of $\tilde{x}$ in $\de\Sw$.\\
To prove this, we notice that in a small neighborhood of $\bar{x}$ we can find a point $\tilde{x}\in \de\Gamma\cap\de^{*2}\Sw$ which is transversally nondegenerate and such that $\tilde{x}\notin T_{\tilde{x}}\de\Gamma$. In fact, if this was not true then in a neighborhood of $\bar{x}$ the manifold $\de\Gamma$ (which is ($N-2$)-dimensional) would contain a straight segment and this is not possible since $\de\Gamma\subset \de B_{r}(p_0)$. By continuity, also at the points close to $\tilde{x}$ the radial direction has non-vanishing component which is transversal to $\de\Gamma$: this says that an open tubular neighborhood of $\de\Gamma$ in $\de^{*2}\Sw$ around $\tilde{x}$ is contained in $V$.\\
\noindent{{\bf{\it Step II. }}} We now complete the proof of the lemma.
Take the point $\tilde{x}$ whose existence is guaranteed by Step I. We can write an orthonormal frame for $T_x\de\Sw$, for $x$ in an open neighborhood $V_1\subseteq V$, as $\{\frac{x}{|x|},e_1,\ldots,e_{N-2}\}$. We can always pick the $e_j$'s such that they diagonalize the second fundamental form $h_x$. By the nondegeneracy property we have $h_x(e_j,e_j)=\lambda_j(x)\neq 0$ for any $j\in\{1,\ldots,N-2\}$. On the other hand, we know from Lemma \ref{lemmatang} that $\left\langle p_0,\nu^{\de\Sw}_x \right\rangle=0$. By differentiating such relation in $V_1$ along $e_j$, we get
$$0=e_j\left(\left\langle p_0,\nu_x \right\rangle\right)=\lambda_j(x)\left\langle p_0,e_j\right\rangle$$  
for all $j$. Therefore, the tangent vector $p_0$, being orthogonal to every $e_j$, has to be parallel to $x$. Since this holds true for any $x$ in an open (i.e. ($N-1$)-dimensional) neighborhood of $\tilde{x}$, $p_0$ is then forced to be $O$.
\end{proof}

\begin{remark} By the previous proposition we deduce that if $p_0\neq O$ then at all points of $\de\Gamma\cap\de^*\Sw$ some of the principal curvatures in the directions orthogonal to $x$ must vanish. Some cases when this happens are those described in Remark \ref{rempyr}.
\end{remark}

Let us turn our attention on $C^1$-polar graph. In this case we can identify completely the point $p_0$.

\begin{proposition}\label{eccop0perpolar}
Consider a cone $\Sw$ such that $\de\omega$ is $C^1$-smooth. Suppose $\overline{\Gamma}=\overline{\de B_{r}(p_0)\cap \Sw}$ is a strictly starshaped hypersurface with respect to $O$ which intersects orthogonally $\de\Sw$ at every point of $\de\Gamma\subset \de\Sw\smallsetminus\{O\}$. Then one of the following two possibilities holds:
\begin{itemize}
\item[(i)] $p_0=O$;
\item[(ii)] $p_0\in\de\Sw$ and $\Sw$ is an half-space.
\end{itemize}
\end{proposition}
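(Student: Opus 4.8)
The plan is to recast the orthogonality hypothesis as a first–order condition on $\de\omega$ regarded as a hypersurface of $\SN$, and then to show that this condition forces $\de\omega$ to be a great subsphere, so that $\omega$ is a hemisphere. Throughout $N\geq3$. Put $q:=p_0$; if $q=O$ we are in case $(i)$, so from now on assume $q\neq O$ and aim at $(ii)$. Since $\de\omega$ is $C^1$ we have $\de^*\Sw=\de\Sw\smallsetminus\{O\}$, hence $\de\Gamma\subset\de^*\Sw$ and Lemma \ref{lemmatang} gives $q\in T_y\de\Sw$ for every $y\in\de\Gamma$. Because $\overline{\Gamma}$ is strictly starshaped about $O$, it is a polar graph over $\overline{\omega}$; hence each $x\in\de\omega$ is the radial projection of exactly one $y\in\de\Gamma$, and there $T_y\de\Sw=\R x\oplus T_x\de\omega$ (an orthogonal decomposition in $\RN$, since $x\perp T_x\SN\supseteq T_x\de\omega$). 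Therefore $q-\la q,x\ra x\in T_x\de\omega$ for every $x\in\de\omega$; equivalently, the smooth vector field $X(x):=q-\la q,x\ra x=\nabla_{\SN}h(x)$, with $h(x):=\la q,x\ra$, is tangent to $\de\omega$ along $\de\omega$ (equivalently, the unit normal of $\de\omega$ in $\SN$ is everywhere orthogonal to $q$).

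Next I would exploit this dynamically. Since $\de\omega$ is compact and the smooth field $X$ is tangent to the $C^1$ hypersurface $\de\omega$, the flow of $X$ preserves $\de\omega$. The rest points of $X$ are exactly the poles $\pm q/|q|$, and every other orbit is an open great–circle arc (a meridian) joining $-q/|q|$ to $q/|q|$; in particular $h|_{\de\omega}$ has no critical point off $\{\pm q/|q|\}$. As $\dim\de\omega=N-2\geq1$, $\de\omega$ is not contained in $\{\pm q/|q|\}$, so it meets some meridian and, being closed, contains both poles (equivalently, the maximum and minimum of $h|_{\de\omega}$ are attained at $q/|q|$ and $-q/|q|$). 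Moreover $\de\omega\smallsetminus\{\pm q/|q|\}$ is a union of meridians: writing $\pi\colon\SN\smallsetminus\{\pm q/|q|\}\to q^\perp\cap\SN$ for the meridian projection (sending $x$ to the point of $q^\perp\cap\SN$ lying on its meridian) and $\Sigma':=\pi(\de\omega\smallsetminus\{\pm q/|q|\})$, we have $\de\omega\smallsetminus\{\pm q/|q|\}=\pi^{-1}(\Sigma')$, and in fact $\Sigma'\subset\de\omega$.

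To conclude I would promote $\Sigma'$ to a great subsphere by a local argument at the pole $q/|q|\in\de\omega$. In geodesic normal coordinates centred at $q/|q|$, identifying $T_{q/|q|}\SN$ with $q^\perp$, the projection $\pi$ becomes $v\mapsto v/|v|$, so $\de\omega$ corresponds near $q/|q|$ to the closed cone $C:=\{tw:t\geq0,\ w\in\Sigma'\}\subseteq q^\perp$. Since $\de\omega$ is of class $C^1$ at $q/|q|$, $C$ is a $C^1$ submanifold through $0$; differentiating at $t=0$ the curve $t\mapsto tw\in C$ gives $w\in T_0C=:W$ for each $w\in\Sigma'$, hence $C\subseteq W$; and since $C$ is then an $(N-2)$-dimensional submanifold of the $(N-2)$-plane $W$, relatively open in $W$, containing $0$ and invariant under dilations, $C=W$. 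Therefore $\Sigma'=W\cap\SN$ is a great $(N-3)$-sphere; writing $W=q^\perp\cap\xi^\perp$ for a unit vector $\xi\perp q$ and using $W\oplus\R q=\xi^\perp$, we obtain $\de\omega=\pi^{-1}(\Sigma')\cup\{\pm q/|q|\}=\{x\in\SN:\la x,\xi\ra=0\}$. Hence $\omega$, open and connected with this equator as boundary, is one of the two open hemispheres $\{x\in\SN:\pm\la x,\xi\ra>0\}$; so $\Sw=\{y\in\RN:\pm\la y,\xi\ra>0\}$ is a half-space with $\de\Sw=\xi^\perp$, and $\la q,\xi\ra=0$ yields $p_0=q\in\de\Sw$, which is alternative $(ii)$.

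The delicate point — and the place where the bare $C^1$-regularity of $\de\omega$ (which rules out the second–order reasoning behind Proposition \ref{lemmaconvexstr}) must be compensated — is precisely the passage from ``$X=\nabla_{\SN}h$ tangent to $\de\omega$'' to ``$\de\omega$ is a union of meridians whose directions form a great subsphere''; this rests, in order, on the flow–invariance of $C^1$ submanifolds under a tangent vector field, and on the rigidity of a cone that happens to be $C^1$ at its vertex.
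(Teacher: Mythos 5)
Your proof is correct, and its engine is the same as the paper's: Lemma \ref{lemmatang} plus the polar-graph correspondence $\de\Gamma\leftrightarrow\de\omega$ shows that the field $x\mapsto p_0-\la p_0,x\ra x$ is tangent to $\de\omega$, and the flow along great circles through $\pm p_0/|p_0|$ forces $\de\omega$ to be an equator, hence $\Sw$ a half-space. Two organizational differences are worth noting. First, the paper begins by proving $p_0\in\de\Sw$ (imported from the Ritor\'e--Rosales / \cite{PT} starshapedness argument), so that $q_0=p_0/|p_0|\in\de\omega$ from the start; you skip that preliminary step entirely, obtain $\pm p_0/|p_0|\in\de\omega$ from closedness of $\de\omega$ under the meridian flow, and recover $p_0\in\de\Sw$ only at the end from $\xi\perp p_0$ --- which makes your argument more self-contained (and the membership $q_0\in\de\omega$ is indeed never needed for the tangency of the field). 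Second, the paper's final deduction (``any great circle belongs to the boundary of a half-sphere, hence $\de\omega$ is such a boundary'') is compressed into one sentence; your tangent-cone rigidity argument at the pole $q/|q|$ --- the cone over $\Sigma'$ in normal coordinates is a $C^1$ $(N-2)$-submanifold through $0$, hence equals its tangent plane $W$, so $\Sigma'=W\cap\SN$ --- actually supplies the detail the paper glosses over, using only the assumed $C^1$ regularity. The one point you rightly flag, flow-invariance of the $C^1$ hypersurface $\de\omega$ under the tangent smooth field, is unproblematic precisely as you indicate: local integral curves inside $\de\omega$ exist by Peano in a $C^1$ chart and must coincide with the unique orbits of the ambient smooth field, and an open-closed argument in time propagates this; the paper relies on the same fact implicitly.
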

\begin{proof}
Arguing as in \cite[Proof of Lemma 4.10]{RitRos} (see also the details given in \cite[Lemma 2.4, Step I and II]{PT}) one can prove that $p_0\in\de\Sw$. Suppose that $p_0\neq O$, and denote $q_0=\frac{p_0}{|p_0|}$. Since $\overline{\Gamma}$ is a polar graph over $\overline{\omega}$, the points of $\de\Gamma$ are in correspondence with the points of $\de\omega$. Recalling that $T_x\de\Sw=T_{\lambda x}\de\Sw$ for any $\lambda >0$, we have from Lemma \ref{lemmatang} that
$$\de\omega\ni q_0\in T_x\de\Sw \quad\mbox{ for all }x\in\de\omega.$$
Therefore, the vector field
$$V_x=q_0-\left\langle q_0,x\right\rangle x,\quad\mbox{ for }x\in\SN,$$
is tangent to $\de\omega$ at every point $x\in\de\omega$. The vector field $V_x$ vanishes only at the antipodal points $\pm q_0$. Moreover, the flow lines of $V$ are the great circles passing through $q_0$: here, according to the standard notation (see, e.g. \cite[pg. 137]{Lee}), by great circle we mean the intersection of the sphere with a $2$-dimensional linear subspace. As a matter of fact it is easy to see that, for any $\bar{x}\in \SN$ different from $\pm  q_0$, the flow line of $V$ starting at $\bar{x}$ is given by the great circle ${\rm span}\{q_0,\bar{x}\}\cap \SN$. Then $\de\omega$, which is a closed $(N-2)$-dimensional $C^1$ manifold, contains all great circles passing through $q_0$ and any point $x\in\de\omega$. Observing that any great circle belongs to the boundary of an half sphere, we deduce that $\de\omega$ must be itself the boundary of an half-sphere (which is indeed $\omega$). Hence $\Sw$ is an half-space, and this concludes the proof.
\end{proof}

We conclude this section proving Theorem \ref{prth1}.

\begin{proof}[Proof of Theorem \ref{prth1}]
The proof consists of two parts. We first show that 
\begin{equation}\label{claimJel}
\exists p_0\in\RN\,\mbox{ such that }\Gamma=\de B_r(p_0)\cap\Sw,
\end{equation}
and then we conclude that $\Gamma$ is in fact a spherical sector by using Proposition \ref{eccop0perpolar}.\\
The assertion \eqref{claimJel} is already proved in \cite[Theorem 6.4]{PT}, where it is deduced from the more general \cite[Theorem 1.3]{PT}. Here we show a self-contained and more direct proof of \eqref{claimJel} holding when $\overline{\Gamma}$ and $\de\Sw$ intersect orthogonally and $\de\omega$ is $C^{1,1}$-smooth. We start by recalling the relation
\begin{equation}\label{Muno}
\div_\Gamma\left(x-\left\langle x,\nu^\Gamma\right\rangle\nu^\Gamma\right)=(N-1)-(N-1)H\left\langle x,\nu^\Gamma\right\rangle,
\end{equation}
which holds for all $x$ belonging to the smooth hypersurface $\Gamma$. Moreover, the vector field $F(x)=x-\left\langle x,\nu^\Gamma\right\rangle\nu^\Gamma$ is $Lip$-smooth up the boundary since we are assuming that $\Gamma$ is $C^{1,1}$-smooth up to the boundary. The orthogonality assumption (and the fact that $\Sw$ is a cone) tells us that
$$\left\langle F(x), n_x\right\rangle =0\qquad\forall x\in\de\Gamma$$
since the outward unit conormal $n_x$ to $\de\Gamma$ coincides in fact with $\nu^{\de\Sw}$. Hence, by integrating \eqref{Muno} over $\Gamma$, we get the first Minkowski formula
\begin{equation}\label{unoM}
\int_\Gamma \left(1 - H\left\langle x,\nu^\Gamma\right\rangle\right){\rm d}\sigma = 0.
\end{equation}
Incidentally, since $H$ is constant, we also deduce that $H$ is necessarily positive because we have $H=\frac{|\Gamma|}{\int_\Gamma \left\langle x,\nu^\Gamma\right\rangle}=\frac{|\Gamma|}{N|\O|}>0$. This is why we have assumed $H>0$ from the beginning. It is proved in \cite[Proposition 1]{CP} that also higher order Minkowski formulas hold true under the orthogonality assumption. In particular we have the validity of the following second Minkowski formula
\begin{equation}\label{dueM}
\int_\Gamma \left(H - \sigma_2(h)\left\langle x,\nu^\Gamma\right\rangle\right){\rm d}\sigma = 0,
\end{equation}
where $\sigma_2(h)$ denotes the second elementary symmetric function of the eigenvalues of $h$, i.e. $\sigma_2(h)=\frac{2}{(N-1)(N-2)}\sum_{1\leq i<j\leq N-1}{k_i k_j}$ where $k_i$'s are the principal curvatures of $\Gamma$. The formula \eqref{dueM} is obtained in \cite{CP} by making a variation of the formulas \eqref{Muno}-\eqref{unoM} along the normal direction $\nu^\Gamma$ and differentiating along this direction (we recall that in our assumptions the vector $\nu^\Gamma$ is $Lip$-smooth up to $\de\Gamma$). By using \eqref{unoM}, \eqref{dueM}, the fact that $H$ is a positive constant, and the arithmetic-geometric inequality $\sigma_2(h)\leq H^2$, we get
\begin{equation}\label{JJl}
0=H\int_\Gamma \left(1 - H\left\langle x,\nu^\Gamma\right\rangle\right){\rm d}\sigma=\int_\Gamma \left(H - H^2\left\langle x,\nu^\Gamma\right\rangle\right){\rm d}\sigma \leq \int_\Gamma \left(H - \sigma_2(h)\left\langle x,\nu^\Gamma\right\rangle\right){\rm d}\sigma = 0.
\end{equation}
We explicitly remark that we have exploited in the previous inequality the strict starshapedness of $\Gamma$, i.e. $\left\langle x,\nu^\Gamma\right\rangle >0$. The relation \eqref{JJl} shows that it holds in fact the equality case in the arithmetic-geometric inequality $\sigma_2(h)\leq H^2$, which says that the second fundamental form $h$ is at every point of $\Gamma$ a multiple of the identity and so $h=H\mathbb{I}_{N-1}$. It is then a classical fact that such umbilicality property implies that $\Gamma$ is a portion of a sphere (see e.g. \cite[Section 5]{PT} for the details) as claimed in \eqref{claimJel}.\\
Once \eqref{claimJel} is proved, we can use Proposition \ref{eccop0perpolar} to infer that $p_0=O$, i.e. $\Gamma$ is a spherical sector. The case of the half-space cannot occur since $\omega$ is strictly contained in $\SNp$. We stress that, by invoking Proposition \ref{eccop0perpolar}, we are using again the starshapedness and the orthogonality assumptions. The proof is then complete.
\end{proof}

\section{Isoperimetric cones and symmetrization}\label{sec3}

\noindent We start by defining isoperimetric cones.

\begin{definition}\label{defisocone}
We say that $\Sw$ is an isoperimetric cone if the only sets contained in $\Sw$ which minimize the relative (to $\Sw$) perimeter under a volume constraint are the spherical sectors. This can be equivalently expressed saying that for any measurable set $E\subset\Sw$ with $|E|<+\infty$ the following isoperimetric inequality holds
\begin{equation}\label{iso}
P_\omega(E)\geq N\omega_N^{\frac{1}{N}}|E|^{\frac{N-1}{N}}
\end{equation}
ad equality is achieved if and only if $E$ is a spherical sector $\Swr$, $R>0$.\\
In \eqref{iso} $P_\omega$ is the relative perimeter of $E$ in $\Sw$ and $\omega_N=|S_{\omega, 1}|$.
\end{definition}

\noindent In \cite{LP} it has been proved that any smooth convex cone is isoperimetric (see \cite{CRS, FI, RitRos} for alternative proofs), in the trivial case when $\Sw$ is an half-space this holds up to translation. We observe that the proof of \cite{FI} also holds for nonsmooth convex cones.\\Here we show that any $C^{1,1}$-smooth cone sufficiently close to an isoperimetric cone, with respect to the $C^{1,1}$-distance on the sphere, is also isoperimetric.\\ More precisely, for $\eta>0$, let us consider the spherical cap
$$\Speta=\left\{ x=(x_1,\ldots,x_N)\in\SN\,:\, x_N>\eta \right\}$$
and (as in \cite{BF}) let us define the class of uniform $C^{1,1}$ open sets on the sphere:
\begin{definition}
Given $\eta>0$, $r>0$, we denote by $\Petar$ the class of open sets $\omega \subset\SN$ such that $\omega\subset\subset\Speta$ and for every $x\in\de\omega$ there exists a ball $B^+_r\subset \omega$ and a ball $B^-_r\subset \SN\cap\left(\SN\smallsetminus \omega\right)$ both of radius $r$, such that $x\in\de B^+_r\cap \de B^-_r$.
\end{definition}

The previous definition means that at every point  of $\de\omega$ an interior and an exterior ball condition holds and the radius of the balls can be taken equal to $r>0$, for all points in $\de\omega$.\\
In the sequel, for $\omega,\omega'\subset \SN$, we denote by ${\rm d}_{L^{\infty}}(\de\omega,\de\omega')$ the Hausdorff distance between $\de\omega$ and $\de\omega'$, with respect to the intrinsic metric on the sphere.

\begin{theorem}\label{thsect3}
Let $N\geq 3$ and let $\Sw$ be a isoperimetric cone belonging to $\Petar$ for some $\eta, r>0$. Then there exists $\e>0$ such that for any domain $\omega'\in\Petar$ with ${\rm d}_{L^\infty}(\de\omega,\de\omega')<\e$ the corresponding cone $\Swp$ is also isoperimetric.
\end{theorem}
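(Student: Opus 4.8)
\textbf{Proof proposal for Theorem \ref{thsect3}.}

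The plan is to argue by contradiction, following the compactness-plus-regularity scheme but replacing the convexity hypothesis on the limit cone by the isoperimetric property, and then invoking Theorem \ref{prth1} in place of the refined Poincar\'e inequality used in \cite{BF}. Suppose the conclusion fails: then there is a sequence $\omega_n'\in\Petar$ with ${\rm d}_{L^\infty}(\de\omega,\de\omega_n')\to 0$ such that each cone $\Swpn$ is \emph{not} isoperimetric. Since the isoperimetric inequality \eqref{iso} with the sharp constant $N\omega_N^{1/N}$ always holds on any cone belonging to $\Petar$ (the constant is a dimensional one, and a minimizer of relative perimeter under a volume constraint exists in this class by standard compactness of sets of finite perimeter), the failure of the isoperimetric \emph{characterization} means: for each $n$ there is a set $E_n\subset\Swpn$, which we may normalize to $|E_n|=\omega_N=|S_{\omega,1}|$, that minimizes the relative perimeter $P_{\omega_n'}$ under its volume constraint but is \emph{not} a spherical sector $\Swr$.

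Next I would set up the regularity and compactness. Because $\omega_n'\in\Petar$ with a \emph{uniform} interior/exterior ball radius $r$, the boundary of $\Swpn$ is uniformly $C^{1,1}$ away from the vertex; hence the standard regularity theory for perimeter minimizers with a $C^{1,1}$ obstacle (as in \cite{BF}, building on \cite{GMT}-type results) gives that the relative boundary $\Gamma_n:=\de^* E_n\cap \Swpn$ is, away from the vertex, a smooth hypersurface of constant mean curvature $H_n>0$ meeting $\de\Swpn$ orthogonally along $\de\Gamma_n$, with uniform curvature bounds; moreover $H_n\to$ the value forced by the normalization (essentially $H_n\to 1$ after accounting for $|E_n|=\omega_N$, since the first Minkowski formula \eqref{unoM} gives $H_n=|\Gamma_n|/(N|E_n|)$ and $|\Gamma_n|$ is bounded by the isoperimetric inequality). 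By the uniform $C^{1,1}$ bound on the $\omega_n'$ and the uniform curvature estimate on $\Gamma_n$, after passing to a subsequence $\omega_n'\to\omega$ in $C^{1,\alpha}$ and $E_n\to E_\infty$ in $L^1$, with $E_\infty\subset\Sw$ a set of finite perimeter; lower semicontinuity of relative perimeter together with the isoperimetric property of the \emph{limit} cone $\Sw$ forces $E_\infty$ to be a spherical sector $S_{\omega,1}$, and the convergence upgrades (again by the uniform curvature bounds and $C^{1,1}$ geometry) to $C^1$ convergence of $\Gamma_n$ to $\de S_{\omega,1}\cap\Sw$, a sphere of radius $1$ centered at $O$.

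With this convergence in hand, the key structural input is that $\Gamma_n$ is a \emph{polar graph}, i.e. strictly starshaped with respect to $O$, for $n$ large. Indeed $\de S_{\omega,1}\cap\Sw$ is trivially strictly starshaped with respect to $O$ (it is a sphere centered at $O$, so $\langle x,\nu^\Gamma\rangle=1>0$), and strict starshapedness is an open condition in the $C^1$ topology: for $n$ large, $\langle x,\nu^{\Gamma_n}\rangle>0$ at every point, so $\Gamma_n$ is a strictly starshaped, smooth, orientable, connected hypersurface with $C^{1,1}$ boundary on $\de\Swpn\smallsetminus\{O\}$, meeting $\de\Swpn$ orthogonally, with constant mean curvature $H_n>0$; and $\omega_n'\subset\subset\Speta\subset S^{N-1}_+$. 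This is exactly the hypothesis set of Theorem \ref{prth1}, which then forces $\Gamma_n$ to be the relative boundary of the spherical sector $S_{\omega_n',1/H_n}$, i.e. $E_n$ \emph{is} a spherical sector — contradicting the choice of $E_n$. Hence no such sequence exists, and the theorem is proved.

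\textbf{Main obstacle.} The delicate point is not the scheme but the regularity and convergence step: one must know that relative perimeter minimizers $E_n$ in cones of class $\Petar$ have, \emph{uniformly in} $n$, smooth constant-mean-curvature relative boundary meeting the uniformly $C^{1,1}$ boundary $\de\Swpn$ orthogonally, with curvature bounds strong enough to pass to a $C^1$ limit and thereby conclude strict starshapedness for large $n$. This is precisely the ``first step'' of the argument in \cite{BF}; here the point is that it requires only the uniform $C^{1,1}$ geometry of the $\omega_n'$ together with the isoperimetric property of the limit cone $\Sw$ (to pin the limit down as a spherical sector), and \emph{not} any convexity — after which Theorem \ref{prth1} does all the remaining work in one stroke.
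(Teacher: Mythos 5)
Your proposal follows essentially the same route as the paper's proof: argue by contradiction, extract non-sector volume-constrained minimizers $E_n$ in the cones $\Swpn$, use compactness plus the isoperimetric property of the \emph{limit} cone $\Sw$ to identify the $L^1$-limit as a spherical sector, upgrade the convergence so that $\de E_n\cap\Swpn$ is a strictly starshaped $C^1$ hypersurface for large $n$, and then conclude via constant mean curvature, orthogonality and Theorem \ref{prth1}. Two caveats. First, your side remark that the sharp inequality \eqref{iso} ``always holds on any cone in $\Petar$'' is not true in general (that is essentially what being isoperimetric means); it is also unnecessary, since all you need is existence of volume-constrained minimizers, which holds because $\omega'_n\subset\subset\SNp$ (cf.\ \cite[Section 3]{RitRos}). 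Second, the step you flag as the main obstacle is indeed where the work lies, but ``uniform curvature bounds'' is not what the regularity theory for perimeter minimizers directly provides; the paper instead treats the $E_n$ as $(\Lambda,r_0)$-almost minimizers and uses interior regularity \cite{M}, boundary regularity \cite{DPM}, the spherical-excess characterization of the singular sets (which are shown to be empty for large $n$ precisely because of the $L^1$-closeness to the smooth limit sector), and the convergence of outer unit normals to get the $C^1$ polar-graph (strict starshapedness) property. With that substitution in place of your curvature-bound heuristic, your argument coincides with the paper's proof.
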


\begin{proof}
Let us argue by contradiction and assume that there exists a sequence of cones $\Swpn$, with $\omega'_n\in\Petar$, such that
$$d_{L^\infty}(\de\omega,\de\omega'_n)\rightarrow 0\quad\mbox{as }n\rightarrow\infty \qquad\mbox{ but }\Swpn \mbox{ are not isoperimetric.}$$
Note that by \cite[Section 3]{RitRos}, the minimizers of the relative perimeter $P_{\omega'_n}$ with a volume constraint exist, for all fixed volume, because $\omega'_n\subset\subset\SNp$. Since we are assuming that $\Swpn$ are not isoperimetric cones, there exists a sequence of sets $E_n\subset\Swpn$ such that $E_n$ is not a spherical sector $S_{\omega'_n, R}$ though $E_n$ minimizes $P_{\omega'_n}$ under a volume constraint which, by the invariance under rescaling, we can assume to be $|E_n|=1$.\\
Now we follow the first part of the proof of Theorem 1.2 of \cite{BF} to deduce that $\de E_n\cap \Swpn$ are $C^1$-graphs over $\omega'_n$ (i.e. strictly starshaped with respect to $O$).\\First we observe that the sets $E_n$ are almost minimizers for the perimeter functional $P_{\omega'_n}$ (or $(\Lambda,r_0)$-perimeter minimizers in the sense of Almgren, for some $\Lambda\geq0$, $r_0>0$), see \cite[Section 21]{M} or \cite[Definition 1.8]{DPM}. Then, using the arguments of \cite[Theorem 3.4]{RitRos} and \cite[Proposition 21.13 and Theorem 21.14]{M} we get the existence of a set of finite perimeter $E_*\subset\RN$ such that, up to a subsequence,
\begin{eqnarray*}
&&|E_n\Delta E_*|\rightarrow 0\quad\mbox{ as }n\rightarrow\infty,\qquad\mbox{and}\\
&&P_{\Sw}(E_*)\leq\liminf_{n\rightarrow\infty}{P_{\Swpn}(E_n)}.\nonumber
\end{eqnarray*}
By standard arguments we then have that $E_*$ is a minimizer for $P_{\Sw}$ with the volume constraint $|E_*|=1$. Thus, $E_*$ is a spherical sector $\Swr$ for some $R>0$ because we are assuming that $\Sw$ is an isoperimetric cone. By the regularity theory for almost minimizers, both in the interior \cite[Part III]{M} and up to the boundary \cite{DPM}, we get that $\de E_n\cap \Swpn$ is a $C^{1,\gamma}$ ($\gamma\in (0,\frac{1}{2})$) manifold in a neighborhood of any $x\in \left(\de E_n\cap \Swpn\right)\smallsetminus \Sigma_{1,n}$, with $\Sigma_{1,n}$ closed and ${\rm H}^{N-1}\left(\Sigma_{1,n}\right)=0$, while for every $x\in \left(\de E_n\cap \de\Swpn\right)\smallsetminus \Sigma_{2,n}$, with $\Sigma_{2,n}$ closed, the closure of $\de E_n\cap \Swpn$ is a  $C^{1,\frac{1}{2}}$-manifold and ${\rm H}^{N-2}\left(\Sigma_{2,n}\right)=0$.\\
Now we want to use the closeness of $E_n$ to the smooth set $E_*=\Swr$ to show that the singular sets $\Sigma_{1,n}$ and $\Sigma_{2,n}$ are empty for $n$ sufficiently large.\\
To do this we use the characterization of the singular sets by the spherical excess (see \cite[Section 22]{M} and \cite[Section 3]{DPM}) which essentially asserts that at any singular point the spherical excess must be bigger than a constant $\delta>0$ which depends only on the dimension $N$. Then, using the continuity of the excess with respect to the ${\rm L}^1$-convergence of the almost minimizers (\cite[Section 22]{M}, \cite[Remark 3.6]{DPM}), the convergence of sequences of points $x_n\in\overline{E}_n$ to points in $\overline{E}_*$ (\cite[Theorem 21.14]{M}, \cite[Theorem 2.9]{DPM}), and the fact that $\overline{E}_*$ does not have singular points, we get that $\Sigma_{1,n}$ and $\Sigma_{2,n}$ are empty, for $n$ sufficiently large.\\
Finally, by the convergence of the outer unit normals (\cite[Theorem 26.6]{M}) we deduce that $\overline{E_n\cap\Swpn}$ is a strictly starshaped $C^1$ hypersurface, i.e. is a $C^1$-polar graph. Higher regularity then follows by standard elliptic regularity theory.\\
On the other side, since $E_n$ are minimizers for $P_{\omega'_n}$ with a volume constraint, by \cite{RitRos}-\cite{SZ} we know that $\de E_n$ has constant mean curvature and intersects $\de\Swpn$ orthogonally. Hence, by Theorem \ref{prth1} we have that $\de E_n$ must be a portion of a sphere centered at the origin, in other words $E_n$ is a spherical sector $S_{\omega'_n, R}$ for some $R>0$ which gives a contradiction.
\end{proof}

\begin{corollary}
The set of isoperimetric cones in $\Petar$ is an open set with respect to the $C^{1,1}$-distance of the boundaries.
\end{corollary}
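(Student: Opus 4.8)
The plan is to deduce the corollary directly from Theorem \ref{thsect3}, essentially as a topological reformulation. Fix $\eta, r>0$ and regard $\Petar$ as a metric space. First I would note that Theorem \ref{thsect3} says precisely that every isoperimetric cone $\Sw$ with $\omega\in\Petar$ admits a radius $\e=\e(\omega)>0$ such that the ball $\{\omega'\in\Petar:\ {\rm d}_{L^\infty}(\de\omega,\de\omega')<\e\}$ consists entirely of domains whose cones are isoperimetric. Since the radius $\e$ is allowed to depend on the centre $\omega$, this is exactly the assertion that the set of isoperimetric cones in $\Petar$ is (relatively) open with respect to the Hausdorff metric ${\rm d}_{L^\infty}$ on $\Petar$.

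The second step is to upgrade this to the $C^{1,1}$-distance. Here the key observation is that on $\Petar$ the uniform interior and exterior ball conditions of radius $r$ provide a uniform bound on the second fundamental form of $\de\omega$, so all members of $\Petar$ share the same $C^{1,1}$ bounds and only the position of the boundary varies; in particular, within $\Petar$ the Hausdorff distance of the boundaries and the $C^{1,\alpha}$-distance ($\alpha<1$) are topologically equivalent. For any reasonable notion of $C^{1,1}$-distance (matching the boundaries via nearest-point projection, or via polar graphs over a common reference), convergence in the $C^{1,1}$-distance forces convergence of the boundaries in ${\rm d}_{L^\infty}$; equivalently, every ${\rm d}_{L^\infty}$-ball in $\Petar$ contains a $C^{1,1}$-ball with the same centre. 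Hence a subset of $\Petar$ that is open for ${\rm d}_{L^\infty}$ is a fortiori open for the finer $C^{1,1}$-topology, and combining this with the first step gives the claim.

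I do not expect a genuine obstacle: the full content of the corollary already sits inside Theorem \ref{thsect3}. The only point deserving a sentence of care is the comparison of the two metrics on $\Petar$, namely translating the $\e$ produced by Theorem \ref{thsect3} into a threshold on the $C^{1,1}$-distance using the uniform second-order bounds in $\Petar$. If one instead simply reads ``$C^{1,1}$-distance'' as the Hausdorff distance restricted to the uniform $C^{1,1}$ class $\Petar$ (as is natural in this setting, and as in \cite{BF}), then the corollary is a verbatim restatement of Theorem \ref{thsect3} and nothing further is required.
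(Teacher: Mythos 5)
Your proposal is correct and matches the paper's (implicit) argument: the corollary is stated without a separate proof precisely because it is a direct topological restatement of Theorem \ref{thsect3}, with the $C^{1,1}$-distance on the uniform class $\Petar$ understood, as in \cite{BF}, so that closeness in it controls ${\rm d}_{L^\infty}$ of the boundaries. Your extra sentence comparing the two metrics is the only point needing care, and you handle it in the right direction (openness for the coarser Hausdorff metric implies openness for the finer $C^{1,1}$ one).
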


\begin{remark}
Theorem \ref{thsect3} is a generalization of \cite[Theorem 1.2]{BF} where it is proved that almost convex cones are isoperimetric. We do not require the limit cone to be convex since we conclude by using Theorem \ref{prth1} which does not require the convexity of the cone. The use of Theorem \ref{prth1} also allows to shorten the proof of \cite{BF} as explained in the Introduction.
\end{remark}

\begin{remark}
As a consequence of Theorem \ref{prth1} we have that, if a cone contained in the hemisphere is not isoperimetric then a smooth, volume constrained minimizer $F$ cannot be a strictly starshaped set with respect to the vertex of the cone. This is because $\de F\cap \Sw$ must have constant mean curvature and intersect $\de\Sw$ orthogonally, so Theorem \ref{prth1} would give a contradiction because then $F$ would be a spherical sector $\Swr$.
\end{remark}

Now we define the $\o$-symmetrization for functions defined in sector-like domains in isoperimetric cones. This symmetrization was introduced in \cite{PaTr} for more general domains in $\RN$, $N\geq 3$, and in \cite{Ba} for $N=2$ (see also \cite{LiPaTr}).\\
Let $\Sw$ be an isoperimetric cone and $\O\subset\Sw$ a sector-like domain. For a measurable function $u:\O\longrightarrow\R$, we denote by $\mu(t)$ its distribution function
$$\mu(t)=|\{x\in\O\,\:\,|u(x)|>t\}|,\qquad t\in[0,+\infty),$$
and by $u^{\sharp}$ the decreasing rearrangement
$$u^{\sharp}(s)=\inf{\{t\geq 0\,\:\,\mu(t)<s\}},\qquad s\in[0,|\O|].$$
Then, for $R>0$, consider a spherical sector $\Swr=\Sw\cap B_R$ where $B_R$ is the ball centered at $O$ (the vertex of the cone) with radius $R$ and denote by $S_\o(\O)$ the spherical sector having the same measure as $\O$. The $\o$-symmetrization is defined as the transformation which associates to a function $u$ the radial decreasing function $u^*_\o(x)$ defined as:
$$u^*_\o(x)=u^\sharp(\o_N|x|^N),\qquad x\in S_\o(\O),$$
where $\o_N$ is the measure of the unit spherical sector $S_{\o,1}$. As pointed out in \cite{PaTr}, this symmetrization has the same properties as the Schwarz symmetrization. In particular:
\begin{equation}\label{normeLp}
\int_\O{|u(x)|^p\,{\rm d}x}=\int_{S_\o(\O)}{|u^*_\o(x)|^p\,{\rm d}x}\qquad\forall p>0.
\end{equation}
Now we consider the Sobolev space 
$$\Wup=\{u\in W^{1,p}(\O)\,\mbox{ such that }\,u=0\,\mbox{ on }\Gamma\},\qquad p\geq 1.$$
Let us observe that if $u\in\Wup$ then $u^*_\o=0$ on $\tilde{\Gamma}=S_\o(\O)\cap\Sw$. However it is not obvious that $u^*_\o\in W_0^{1,p}(S_\o(\O)\cup \tilde{\Gamma}_1)$, where $\tilde{\Gamma}_1=\de S_\o(\O)\cap \de\Sw$. If the cone is isoperimetric then this is true and actually an analogous of the Polya-Szego inequality holds and also the equality case can be completely characterized. More precisely we have:
\begin{theorem}\label{thPSZ}
Let $\Sw$ be an isoperimetric cone, and $\O\subset\Sw$ a sector-like domain. Let $1\leq p<\infty$ and $u\in\Wup$ with $u\geq 0$. Then
\begin{equation}\label{polsz}
\int_{S_\o(\O)}{|\nabla u^*_\o(x)|^p\,{\rm d}x}\leq \int_{\O}{|\nabla u(x)|^p\,{\rm d}x}.
\end{equation}
In particular $u^*_\o\in W_0^{1,p}(S_\o(\O)\cup\tilde{\Gamma}_1)$. Moreover equality holds in \eqref{polsz} if and only if $\O=S_\o(\O)$ and $u=u^*_\o$.
\end{theorem}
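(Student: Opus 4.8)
The plan is to follow the classical route of Polya--Szego via the coarea formula, replacing the Euclidean isoperimetric inequality by the hypothesis that $\Sw$ is an isoperimetric cone. First I would recall the coarea formula for $u\in\Wup$, $u\geq 0$: writing $\mu(t)=|\{u>t\}|$ for the distribution function and $E_t=\{x\in\O:u(x)>t\}$, one has $P_\omega(E_t)=\textsc{H}_{N-1}(\de E_t\cap\Sw)$ for a.e.\ $t$ (the part of $\de E_t$ on $\de\Sw$ does not count for the relative perimeter, which is exactly why the Neumann condition on $\Gamma_1$ is compatible with the symmetrization), and $\mu$ is absolutely continuous with $-\mu'(t)=\int_{\{u=t\}}\frac{d\textsc{H}_{N-1}}{|\nabla u|}$ for a.e.\ $t$. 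Combining H\"older's inequality on the level set $\{u=t\}$,
\begin{equation*}
P_\omega(E_t)=\int_{\{u=t\}}1\,d\textsc{H}_{N-1}\leq\left(\int_{\{u=t\}}|\nabla u|^{p-1}\,d\textsc{H}_{N-1}\right)^{\frac1p}\left(\int_{\{u=t\}}\frac{d\textsc{H}_{N-1}}{|\nabla u|}\right)^{\frac{p-1}{p}},
\end{equation*}
with the isoperimetric inequality \eqref{iso}, $P_\omega(E_t)\geq N\omega_N^{1/N}\mu(t)^{(N-1)/N}$, gives a pointwise-in-$t$ differential inequality relating $\mu(t)$, $\mu'(t)$ and $\int_{\{u=t\}}|\nabla u|^{p-1}$.

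Next I would compute the left-hand side of \eqref{polsz} explicitly. Since $u^*_\omega(x)=u^\sharp(\omega_N|x|^N)$ is radial on the spherical sector $S_\omega(\O)=S_{\omega,R_0}$ with $\omega_N R_0^N=|\O|$, its level sets are themselves spherical sectors $S_{\omega,\rho}$ with $\omega_N\rho^N=\mu(t)$, so $\textsc{H}_{N-1}(\de S_{\omega,\rho}\cap\Sw)=N\omega_N^{1/N}\mu(t)^{(N-1)/N}$ — i.e.\ the isoperimetric inequality is an equality for these sets — and $|\nabla u^*_\omega|$ is constant on each level set. Running the coarea computation backwards for $u^*_\omega$ converts the inequality chain into equality, and one obtains
\begin{equation*}
\int_{S_\omega(\O)}|\nabla u^*_\omega|^p\,dx=\int_0^\infty\left(\frac{N\omega_N^{1/N}\mu(t)^{(N-1)/N}}{-\mu'(t)}\right)^{p-1}\!\!\!\big(N\omega_N^{1/N}\mu(t)^{(N-1)/N}\big)\,dt,
\end{equation*}
which by the differential inequality above is dominated by $\int_0^\infty\int_{\{u=t\}}|\nabla u|^{p-1}\,d\textsc{H}_{N-1}\,dt=\int_\O|\nabla u|^p\,dx$, the last equality again by coarea. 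This proves \eqref{polsz}; the membership $u^*_\omega\in W_0^{1,p}(S_\omega(\O)\cup\tilde\Gamma_1)$ follows because the finiteness of the right-hand side forces $u^*_\omega$ to be a (radial) $W^{1,p}$ function vanishing on $\tilde\Gamma$, with no boundary condition imposed on $\tilde\Gamma_1$.

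For the equality case I would trace back where inequalities were used: equality in \eqref{polsz} forces, for a.e.\ $t$, equality in the H\"older step on $\{u=t\}$ (so $|\nabla u|$ is a.e.\ constant on $\{u=t\}$) and equality in the isoperimetric inequality \eqref{iso} for a.e.\ level set $E_t$. By the characterization of equality in Definition \ref{defisocone}, each $E_t$ is then (up to the null set of bad $t$) a spherical sector $S_{\omega,\rho(t)}$ centered at the vertex. A monotonicity/nesting argument in $t$ then shows all these sectors are concentric at $O$, hence $u$ is already radial with radial level sets, so $\O=\bigcup_t E_t$ (up to measure zero) is a spherical sector, $\O=S_\omega(\O)$, and $u=u^*_\omega$. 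The main obstacle I anticipate is making the equality analysis fully rigorous: controlling the exceptional null set of levels $t$, handling level sets where $\nabla u=0$ on a set of positive measure (which forces care in the coarea/H\"older manipulations, as in the classical Brothers--Ziemer analysis), and upgrading ``each $E_t$ is a sector for a.e.\ $t$'' to ``$\O$ is a sector'' — this last point uses that $\{E_t\}$ is a nested family exhausting $\O$ and that a nested family of spherical sectors all centered at $O$ has union a spherical sector centered at $O$. The Polya--Szego inequality itself is comparatively routine once the isoperimetric hypothesis is in hand.
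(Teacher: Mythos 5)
Your proposal is correct and follows essentially the same route as the paper: the paper proves \eqref{polsz} by invoking the classical Talenti/Kesavan coarea--H\"older argument with the Euclidean isoperimetric inequality replaced by \eqref{iso}, and handles equality exactly as you do, noting that extremal level sets are spherical sectors all centered at the vertex, so the Brothers--Ziemer center-drift issue cannot occur and $\O=S_\o(\O)$, $u=u^*_\o$ follow. The only difference is one of presentation: you unfold the standard coarea computation (with the appropriate caveats about critical points and exceptional levels), whereas the paper simply cites \cite{Ta} and \cite[Theorem 2.3.1]{Ke} for those details.
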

\begin{proof}
The statement \eqref{polsz} is already included in \cite[Proposition 1.2]{LiPaTr} without any proof, since it can be obtained by the same proof of the analogous inequality for the Schwarz symmetrization just replacing the classical isoperimetric inequality by \eqref{iso} everywhere. We refer to \cite{Ta} and to the book \cite[Theorem 2.3.1]{Ke}. If equality holds in \eqref{polsz} then, following for example the detailed proof of Theorem 2.3.1 in \cite{Ke}, it is easy to see that almost all level sets $E_t=\{u>t\}$, $t\geq 0$, of $u$ must satisfy the equality in the isoperimetric inequality in \eqref{iso}. Then, since the cone is isoperimetric, this implies that the level sets form a decreasing family of concentric spherical sectors. This implies that $\O=S_\o(\O)$ and $u=u^*_\o$.
\end{proof}
\begin{remark}
Let us point out that if we consider the classical Polya-Szego inequality, i.e. \eqref{polsz} in the space $W_0^{1,p}(\O)$ using Schwarz symmetrization, then it is not true that the equality case holds if and only if $\O$ is a ball and $u\equiv u^*$ (being $u^*$ the Schwarz symmetrization of $u$). Indeed, see \cite[Section 2.3]{Ke}, though one can easily deduce that almost all level sets of $u$ are balls, it can happen that the centers of them are different (see Example 3.1 in \cite{Ke}) so that $u\not\equiv u^*$. A remarkable result of \cite{BZ} shows that this can be prevented by assuming that the set of the points where $\nabla u^*(x)$ vanishes has zero measure. In the case of the $\o$-symmetrization this difficulty does not arise since the vertex of the cone $\Sw$ is fixed and the optimal sets for \eqref{iso} are spherical sectors with the same center.
\end{remark}

\section{Saint-Venant type principle}\label{sec4}

We want to exploit the properties of the $\o$-symmetrization in isoperimetric cones in order to minimize a suitable torsional energy. In this way we are going to prove the analogous, in our conical setting, of the classical Saint-Venant principle. We will see in Proposition \ref{domder} that this is closely related to the partially overdetermined problem \eqref{serrintype} in sector-like domains studied in \cite{PT}.\\
Fix an isoperimetric cone $\Sw$. For any sector-like domain $\O\subset \Sw$ we can define
\begin{equation}\label{defT}
\TwO=\inf_{v\in\Wud,\,v\neq 0 }-\frac{\left(\int_\O{v(x)\,{\rm d}x}\right)^2}{2\int_\O{|\nabla v(x)|^2\,{\rm d}x}}
\end{equation}
where $\Wud$ denotes the Sobolev space of functions in $W^{1,2}(\O)$ whose trace vanishes on $\Gamma$ (recall that $\de\O=\Gamma\cup\Gamma_1\cup\de\Gamma$). The functional $\TwO$ is well-defined by the Poincar\'e inequality, which holds true in $\Wud$ (see, e.g., \cite[Remark 2.3.3]{Ke1}). On the other hand, one can rewrite \eqref{defT} as
\begin{equation}\label{defTequiv}
\TwO=\inf_{v\in\Wud}{J(v)},
\end{equation}
where 
$$J(v)=\frac{1}{2}\int_\O{|\nabla v(x)|^2\,{\rm d}x}-\int_\O{v(x)\,{\rm d}x}.$$ 
Since $J$ is convex, it attains its unique minimum at the unique weak solution $u=u_\O$ of the mixed boundary value problem
\begin{equation}\label{mixedtype}
\begin{cases}
   -\Delta u=1 & \mbox{ in }\O, \\
   u=0 & \mbox{ on }\Gamma,\\
	\frac{\de u}{\de\nu}=0 & \mbox{ on }\Gamma_1\smallsetminus\{O\}.
\end{cases}
\end{equation}
Such a solution is positive in $\O$ and we have
$$\int_\O{u_\O(x)\,{\rm d}x}=\int_\O{|\nabla u_\O(x)|^2\,{\rm d}x}.$$
Therefore, we get
\begin{equation}\label{altradefT}
\TwO=-\frac{1}{2}\int_\O{|\nabla u_\O(x)|^2\,{\rm d}x}=-\frac{1}{2}\int_\O{u_\O(x)\,{\rm d}x}.
\end{equation}
\begin{remark}
The fact that \eqref{defT} and \eqref{defTequiv} are equivalent follows from the fact that, for all $v\in\Wud$ with $v\neq 0$, we have
$$J(v)=-\frac{\left(\int_\O{v(x)\,{\rm d}x}\right)^2}{2\int_\O{|\nabla v(x)|^2\,{\rm d}x}}+\frac{1}{2}\left(\left(\int_\O{|\nabla v(x)|^2\,{\rm d}x}\right)^{\frac{1}{2}}-\frac{\int_\O{v(x)\,{\rm d}x}}{\left(\int_\O{|\nabla v(x)|^2\,{\rm d}x}\right)^{\frac{1}{2}}}\right)^2.$$
Hence, if the infimum of the functional in \eqref{defT} (which is homogeneous of degree $0$) is attained at some function $w\in\Wud$ with $w\neq 0$, it is also attained at $\bar{w}=\lambda w$ with $\lambda=\frac{\int_\O{w}}{\int_\O{|\nabla w|^2}}$. Since $\int_\O{\bar{w}}=\int_\O{|\nabla \bar{w}|^2}$ we have
$$J(v)\geq -\frac{\left(\int_\O{v(x)\,{\rm d}x}\right)^2}{2\int_\O{|\nabla v(x)|^2\,{\rm d}x}}\geq -\frac{\left(\int_\O{\bar{w}(x)\,{\rm d}x}\right)^2}{2\int_\O{|\nabla \bar{w}(x)|^2\,{\rm d}x}}=J(\bar{w})\qquad\forall v\in\Wud,\,v\neq 0.$$
On the other hand, if the infimum of $J$ is attained at the non-null function $u\in\Wud$, then for any $v\in\Wud$ with $v\neq 0$ we can consider $\bar{v}=\lambda v$ where $\lambda= \frac{\int_\O{v}}{\int_\O{|\nabla v|^2}}$. As before, we have $\int_\O{\bar{v}}=\int_\O{|\nabla \bar{v}|^2}$ and we thus get
$$-\frac{\left(\int_\O{v(x)\,{\rm d}x}\right)^2}{2\int_\O{|\nabla v(x)|^2\,{\rm d}x}}=-\frac{\left(\int_\O{\bar{v}(x)\,{\rm d}x}\right)^2}{2\int_\O{|\nabla \bar{v}(x)|^2\,{\rm d}x}}=J(\bar{v})\geq J(u)\geq -\frac{\left(\int_\O{u(x)\,{\rm d}x}\right)^2}{2\int_\O{|\nabla u(x)|^2\,{\rm d}x}}.$$
\end{remark}

\vskip 0.3cm

The goal is to minimize $\TwO$ in the class of sector-like domains with a volume constraint.
We then define
$$\mathcal{C}_\o=\{\O\subset \Sw\,:\,\O\mbox{ is a sector-like domain with }|\O|=1\}.$$
We want to characterize
\begin{equation}\label{problemtomin}
\inf_{\O\in\mathcal{C}_\o}{\TwO}.
\end{equation}
\begin{remark}
As in the isoperimetric problem \eqref{iso} there is a natural invariance by rescaling. This is due to the fact that a dilated sector-like domain $t\cdot \O$ is still a sector-like domain for any $t>0$, and the functions in $\Wud$ are in natural correspondence with the functions in $W_0^{1,2}(t\cdot\O\cup\left(t\cdot\Gamma_{1}\right))$. This allows, as in the classical Saint-Venant problem, to write $\mathcal{T}_\o(|\O|^{-\frac{1}{N}}\cdot\O)=|\O|^{-\frac{N+2}{N}}\TwO$ and to reformulate the minimization problem in \eqref{problemtomin} as $\inf\left\{|\O|^{-\frac{N+2}{N}}\TwO\,:\,\O\mbox{ is a sector-like domain contained in }\Sw\right\}$ or in the alternative form $\inf\left\{\TwO\,:\,\O\mbox{ is a sector-like domain contained in }\Sw\mbox{ with }|\O|\leq 1\right\}$.
\end{remark}

Given $\O \in\mathcal{C}_\o$, we say that $\O_t=\phi_t(\O)$ is a volume preserving deformation of $\O$ if $|\O_t|=|\O|=1$ for $t$ small and $\phi_t$ is a one-parameter group of diffeomorphisms associated with a smooth vector field $V$ (which we can think with compact support) such that $V(x)\in T_x\de\Sw$ for all $x\in\de\Sw\smallsetminus \{O\}$ and $V(O)=0$. In particular $\O_t\in \mathcal{C}_\o$ for $t$ small: in this case we use the notations $\Gamma^t$ and $\Gamma_1^t$ respectively for $\de\O_t\cap \Sw$ and $\de\O_t\smallsetminus\overline{\Gamma}^t$.\\
We then say that $\O\in\mathcal{C}_\o$ is stationary (or critical point) for $\mathcal{T}_\o$ under the volume constraint if
$$\frac{d}{dt}_{|t=0}\mathcal{T}_\o(\O_t)=0$$
for every volume preserving deformation.\\
In the next proposition we characterize the stationary points of $\TwO$ via the domain-derivative technique, as for other similar problems \cite{HP, LS, Sim, SoZo}.

\begin{proposition}\label{domder}
Let $\Sw$ be any cone such that $\de\Sw\smallsetminus\{O\}$ is smooth. Consider $\O\in \mathcal{C}_\o$ having a smooth relative boundary $\Gamma$ with smooth $\de\Gamma\subset \de\Sw\smallsetminus\{O\}$, and assume that the unique weak solution $u_\O$ of \eqref{mixedtype} belongs to $W^{1,\infty}(\O) \cap W^{2,2}(\O)$. Then, $\O$ is a stationary point for $\mathcal{T}_\o$ under the volume constraint if and only if $u_\O$ satisfies the overdetermined condition $|\nabla u_\O|\equiv constant$ on $\Gamma$.
\end{proposition}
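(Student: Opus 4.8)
The plan is to compute the first variation of $\TwO$ along a volume-preserving deformation $\O_t=\phi_t(\O)$ using the formula \eqref{altradefT}, namely $\mathcal{T}_\o(\O_t)=-\frac12\int_{\O_t}u_{\O_t}\,{\rm d}x$, and then to use the Hadamard domain-derivative machinery to identify the boundary integral that must vanish. First I would recall the standard fact that the map $t\mapsto u_{\O_t}$ is differentiable (in a suitable sense, using the $W^{1,\infty}\cap W^{2,2}$-regularity of $u_\O$ so that traces of $\nabla u_\O$ on $\Gamma$ make sense) and that the shape derivative $u':=\frac{d}{dt}_{|t=0}u_{\O_t}$ solves the linearized problem: $-\Delta u'=0$ in $\O$, $u'=-\frac{\de u_\O}{\de\nu}(V\cdot\nu)$ on $\Gamma$ (differentiating the Dirichlet condition $u_{\O_t}=0$ on $\Gamma^t$), and $\frac{\de u'}{\de\nu}=0$ on $\Gamma_1\setminus\{O\}$ — here the key point is that the deformation field satisfies $V(x)\in T_x\de\Sw$, so the Neumann part of the boundary is preserved and no extra boundary term appears on $\Gamma_1$; the vertex contributes nothing because $V(O)=0$ and $u_\O\in W^{1,\infty}$.

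Next I would differentiate $\mathcal{T}_\o(\O_t)=-\frac12\int_{\O_t}u_{\O_t}$. By the transport/Reynolds formula this gives
\begin{equation*}
\frac{d}{dt}_{|t=0}\mathcal{T}_\o(\O_t)=-\frac12\left(\int_\O u'\,{\rm d}x+\int_{\de\O}u_\O\,(V\cdot\nu)\,{\rm d}\sigma\right)=-\frac12\int_\O u'\,{\rm d}x,
\end{equation*}
since $u_\O=0$ on $\Gamma$ and $V\cdot\nu=0$ on $\Gamma_1$ (as $V$ is tangent to $\de\Sw$). To evaluate $\int_\O u'$, I would use $-\Delta u_\O=1$ to write $\int_\O u'=-\int_\O u'\Delta u_\O=\int_\O\nabla u'\cdot\nabla u_\O-\int_{\de\O}u'\frac{\de u_\O}{\de\nu}$, and then integrate by parts the other way, $\int_\O\nabla u'\cdot\nabla u_\O=-\int_\O u_\O\Delta u'+\int_{\de\O}u_\O\frac{\de u'}{\de\nu}=0$ using $\Delta u'=0$ and $u_\O=0$ on $\Gamma$, $\frac{\de u'}{\de\nu}=0$ on $\Gamma_1$. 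Hence $\int_\O u'=-\int_{\de\O}u'\frac{\de u_\O}{\de\nu}=-\int_\Gamma u'\frac{\de u_\O}{\de\nu}$ (the $\Gamma_1$-part drops since $\frac{\de u_\O}{\de\nu}=0$ there). Substituting the boundary value $u'=-\frac{\de u_\O}{\de\nu}(V\cdot\nu)$ on $\Gamma$ yields
\begin{equation*}
\frac{d}{dt}_{|t=0}\mathcal{T}_\o(\O_t)=-\frac12\int_\Gamma\left(\frac{\de u_\O}{\de\nu}\right)^2(V\cdot\nu)\,{\rm d}\sigma=-\frac12\int_\Gamma|\nabla u_\O|^2\,(V\cdot\nu)\,{\rm d}\sigma,
\end{equation*}
where I used that on $\Gamma$, where $u_\O\equiv 0$, the full gradient is normal, so $|\nabla u_\O|=\left|\frac{\de u_\O}{\de\nu}\right|$.

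Finally I would impose the volume constraint: $|\O_t|=1$ for all small $t$ forces $\frac{d}{dt}_{|t=0}|\O_t|=\int_\Gamma (V\cdot\nu)\,{\rm d}\sigma=0$. Thus $\O$ is stationary for $\TwO$ iff $\int_\Gamma|\nabla u_\O|^2(V\cdot\nu)\,{\rm d}\sigma=0$ for all admissible $V$, i.e. for all $V\cdot\nu$ ranging over smooth functions on $\Gamma$ with zero mean (one must check that every mean-zero normal speed on $\Gamma$ is realized by some admissible $V$, using that $\Gamma$ is smooth, $\de\Gamma\subset\de\Sw\setminus\{O\}$, and that $V$ can be chosen tangent to $\de\Sw$ near $\de\Gamma$ — this is the only mildly delicate point). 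By the standard Lagrange-multiplier/DuBois-Reymond argument this is equivalent to $|\nabla u_\O|^2\equiv\text{const}$ on $\Gamma$, hence $|\nabla u_\O|\equiv\text{const}$ on $\Gamma$, which is exactly the overdetermined condition in \eqref{serrintype}. The main obstacle I expect is the rigorous justification of the shape differentiability of $t\mapsto u_{\O_t}$ and the identification of the boundary condition for $u'$ on $\Gamma$ under only $W^{1,\infty}\cap W^{2,2}$-regularity (so that $\frac{\de u_\O}{\de\nu}$ has a well-defined $L^2$, indeed $L^\infty$, trace on $\Gamma$); once this is in place, the rest is the bookkeeping of the two integrations by parts above, which go through cleanly precisely because $V$ is tangent to the cone and vanishes at the vertex.
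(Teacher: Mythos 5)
Your proposal is correct and follows essentially the same route as the paper: the Hadamard domain derivative of $\mathcal{T}_\o(\O_t)=-\frac12\int_{\O_t}u_{\O_t}$, the same linearized problem for $u'$ (harmonic, $u'=-\frac{\de u_\O}{\de\nu}\left\langle V,\nu\right\rangle$ on $\Gamma$, homogeneous Neumann on $\Gamma_1$ since $V$ is tangent to $\de\Sw$), the same boundary integral $-\frac12\int_\Gamma|\nabla u_\O|^2\left\langle V,\nu\right\rangle\,{\rm d}\sigma$, and the same Lagrange-multiplier argument under the constraint $\int_\Gamma\left\langle V,\nu\right\rangle\,{\rm d}\sigma=0$. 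The two points you flag as delicate are exactly the ones the paper addresses: differentiability of $t\mapsto u_{\O_t}$ is obtained by pulling back to the fixed domain and applying an implicit-function-theorem argument to $F(t,v)=-{\rm div}(M_t\nabla v)-J_t$, the integrations by parts are justified via \cite[Lemma 2.1]{PT} under the $W^{1,\infty}\cap W^{2,2}$ assumption, and the realization of admissible deformations in the converse direction is handled with a cut-off construction following \cite{SZ}.
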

\begin{proof}
Consider any volume preserving deformation $\O_t$, which is determined by the vector field $V$ as above, so that $\O_t\in \mathcal{C}_\o$ for $t\in(-\delta,\delta)$, for some $\delta>0$. The fact that the volume is preserved implies
\begin{equation}\label{derivolum}
0=\frac{d}{dt}_{|t=0}|\O_t|=\int_{\de\O}{\left\langle V,\nu\right\rangle{\rm d}\sigma}=\int_{\Gamma}{\left\langle V,\nu\right\rangle{\rm d}\sigma},
\end{equation}
where in the last equality we used that $V$ is smooth and is tangent to $\de\Sw$ at every point of $\de\Sw\smallsetminus\{O\}$. On the other hand, we can consider the weak solution $u_t$ relative to the mixed boundary value problem \eqref{mixedtype} in $\O_t$. Since we have
$$\Wud=\{v\circ \phi_t\,:\, v\in W_0^{1,2}(\O_t\cup\Gamma^t_1)\},$$
we can consider
$$\hat{u}_t=u_t\circ \phi_t \in \Wud.$$
The fact that $u_t$ is a solution can be expressed as
$$\int_{\O_t}{\left\langle \nabla u_t(x),\nabla v(x)\right\rangle\,{\rm d}x} -\int_{\O_t}{v(x)\,{\rm d}x}=0\qquad\forall v\in W_0^{1,2}(\O_t\cup\Gamma^t_1).$$
We can transfer this relation on $\hat{u}_t$ as follows:
$$\int_{\O}{\left\langle M_t\nabla  \hat{u}_t(x),\nabla w(x)\right\rangle J_t(x) \,{\rm d}x}-\int_{\O}{w(x) J_t(x)\,{\rm d}x}=0\qquad\forall w\in \Wud,$$
where $J_t(x)={\rm det}(\mathcal{J}\phi_t(x))$ and $M_t=\mathcal{J}\phi^{-1}_t (\phi_t(x))\left(\mathcal{J}\phi^{-1}_t (\phi_t(x))\right)^T$. Let us now consider
$$F:(-\delta,\delta)\times \Wud \longrightarrow \left(\Wud\right)^*$$
defined as
$$F(t,v)= -{\rm div}\left(M_t\nabla v\right) - J_t.$$
We know that $F(t,\hat{u}_t)=0$ for every $t$. One can show that $F$ is smooth and the Gateaux derivative $\de_v F(0,u_\O)=-\Delta v$ (this defines an isomorphism since for every $f\in \left(\Wud\right)^*$ there exists a unique $v\in \Wud$ such that $-\Delta v=f$). Therefore $t\mapsto \hat{u}_t$ is smooth, and then
$$t\mapsto u_t=\hat{u}_t\circ \phi_t^{-1} \quad\mbox{is differentiable}.$$
We denote by $u'$ and $\hat{u}'$ respectively the derivatives with respect to $t$ of $u_t$ and $\hat{u}_t$ computed at $t=0$. We have that
\begin{equation}\label{relazuprimo}
u'=\hat{u}'-\left\langle \nabla u_\O, V\right\rangle.
\end{equation}
Since $\overline{\Omega}$ is smooth except for $\de\Gamma$ and the vertex $O$, so are $u_\O, u'$. We have then $\Delta u'=0$ in $\Omega$. Being $\hat{u}'\in \Wud$, we have also $u'=-\left\langle \nabla u_\O, V\right\rangle$ on $\Gamma$. Finally, since $\de\Sw$ is mapped into itself by $\phi_t$, the points in $\Gamma_1$ stay in $\Gamma_1$ for some small $t$. Differentiating in $t$ variable the relation
$$\left\langle \nabla u_t (\phi_t(x)), \nu_{\phi_t(x)}\right\rangle=0,$$
we get
$$0=\left\langle \nabla u' (x), \nu_{x}\right\rangle + V\left(\left\langle \nabla u, \nu\right\rangle\right)=\left\langle \nabla u' (x), \nu_{x}\right\rangle$$
for every $x\in\Gamma_1\smallsetminus\{O\}$. Hence $u'$ is a solution to the mixed boundary value problem
$$\begin{cases}
   -\Delta u'=0 & \mbox{ in }\O, \\
   u'=-\frac{\de u_\O}{\de\nu}\left\langle V,\nu\right\rangle & \mbox{ on }\Gamma,\\
	\frac{\de u'}{\de\nu}=0 & \mbox{ on }\Gamma_1\smallsetminus\{O\}.
\end{cases}$$
We can also compute the derivative with respect to $t$ of the torsion functional. From \eqref{altradefT} we get
\begin{eqnarray}\label{variazprimatorsione}
&&\frac{d}{dt}_{|t=0}\mathcal{T}_\o(\O_t)=-\frac{1}{2}\int_\O{ \hat{u}'(x)\,{\rm d}x}-\frac{1}{2}\int_{\O}{ u_\O(x){\rm div}(V)(x)\,{\rm d}x}\nonumber\\
&&=-\frac{1}{2}\int_\O {u'(x)\,{\rm d}x}-\frac{1}{2}\int_{\O}{ {\rm div}(u_\O V)(x)\,{\rm d}x}\nonumber\\
&&=+\frac{1}{2}\int_\O{ u'(x)\Delta u_\O(x)\,{\rm d}x}=-\frac{1}{2}\int_\O{ \left\langle \nabla u'(x),\nabla u_\O(x)\right\rangle\,{\rm d}x}+\frac{1}{2}\int_\O{ {\rm div}\left(u'\nabla u_\O\right)(x)\,{\rm d}x}\nonumber\\
&&= -\frac{1}{2}\int_\O{ {\rm div}(u_\O\nabla u')(x)\,{\rm d}x}-\frac{1}{2}\int_\Gamma |\nabla u_\O|^2\left\langle V,\nu \right\rangle\nonumber{\rm d}\sigma\\
&&=-\frac{1}{2}\int_\Gamma |\nabla u_\O|^2\left\langle V,\nu \right\rangle{\rm d}\sigma.
\end{eqnarray}
To justify the previous applications of the divergence theorem we can make use of \cite[Lemma 2.1]{PT}, which requires a certain degree of integrability for the relevant vector fields. The assumption that $u_{\O}\in W^{1,\infty}(\O) \cap W^{2,2}(\O)$ is sufficient for these purposes. We notice in particular that, under this assumption, from \eqref{relazuprimo} we have $u'\in W^{1,2}(\O)$.\\
The desired statement then follows from \eqref{variazprimatorsione} and \eqref{derivolum}. As a matter of fact, if $u_\O$ satisfies the overdetermined condition $|\nabla u_\O|\equiv constant$ on $\Gamma$, then it is now obvious that $\O$ is a stationary point for $\mathcal{T}_\o$. Viceversa, if we have a stationary point for $\mathcal{T}_\o$, then $\int_\Gamma \left(|\nabla u_\O|^2-c\right)\left\langle V,\nu \right\rangle=0$ for all constants $c$ and any $V$ admissible and satisfying \eqref{derivolum} (see e.g. \cite{SZ} for the construction of volume preserving deformations starting from admissible vector fields with the property \eqref{derivolum}). If we assume by contradiction that $|\nabla u_\O|$ is not constant on $\Gamma$, we could then find a compact set $K$ included in $\Gamma$ where $|\nabla u_\O|$ is not constant. We could then pick a nonnegative cut-off function $\psi$ which is $1$ on $K$ and with support compactly contained in the cone, and we could choose $c=\frac{1}{\int_\Gamma \psi}\int_{\Gamma}\psi|\nabla u_\O|^2$ and build a deformation starting from $V=\psi(|\nabla u_\O|^2-c)\nu$: the stationary condition would then imply that $\int_K \left(|\nabla u_\O|^2-c\right)^2=0$, giving a contradiction.
\end{proof}

From Proposition \ref{domder} and Theorem A we deduce the following
\begin{corollary}
Let $\Sw$ be a convex cone such that $\de\Sw\smallsetminus\{O\}$ is smooth. Consider $\O\in \mathcal{C}_\o$ having a smooth relative boundary $\Gamma$ with smooth $\de\Gamma\subset \de\Sw\smallsetminus\{O\}$, and assume that the unique weak solution $u_\O$ of \eqref{mixedtype} belongs to $W^{1,\infty}(\O) \cap W^{2,2}(\O)$. If $\O$ is a stationary point for $\mathcal{T}_\o$ under the volume constraint, then $\O=\Sw\cap B_{R}(p_0)$ and one of the following two possibilities holds
\begin{itemize}
\item[(i)] $p_0=O$;
\item[(ii)] $p_0\in\de\Sw$ and $\Gamma$ is a half-sphere lying over a flat portion of $\de\Sw$.
\end{itemize}
\end{corollary}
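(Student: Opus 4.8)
The plan is to combine the variational characterization from Proposition \ref{domder} with the rigidity result of Theorem A. The logical structure is a two-step reduction: first observe that being a stationary point of $\mathcal{T}_\o$ under the volume constraint is equivalent (by Proposition \ref{domder}) to the weak solution $u_\O$ of the mixed problem \eqref{mixedtype} satisfying the extra boundary condition $|\nabla u_\O|\equiv \text{const}$ on $\Gamma$; then show that this extra condition upgrades $u_\O$ to a genuine solution of the fully overdetermined problem \eqref{serrintype}, at which point Theorem A applies verbatim and yields the claimed dichotomy.

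\emph{First step.} Since $\Sw$ is in particular a cone with $\de\Sw\smallsetminus\{O\}$ smooth, and $\O\in\mathcal{C}_\o$ has smooth relative boundary $\Gamma$ with smooth $\de\Gamma\subset\de\Sw\smallsetminus\{O\}$, and $u_\O\in W^{1,\infty}(\O)\cap W^{2,2}(\O)$, all hypotheses of Proposition \ref{domder} are in force. Hence the stationarity assumption gives a constant $c_0\geq 0$ with $|\nabla u_\O|\equiv c_0$ on $\Gamma$. One needs $c_0>0$: this follows because $u_\O>0$ in $\O$ and $u_\O=0$ on $\Gamma$, so by Hopf-type reasoning (or simply because $\int_\O u_\O = \int_\O |\nabla u_\O|^2 >0$ forces $u_\O\not\equiv 0$, and a solution of $-\Delta u_\O=1$ vanishing on an open piece of boundary with $\nabla u_\O=0$ there would be forced to vanish identically by unique continuation) the normal derivative cannot be identically zero on $\Gamma$, hence $c_0>0$.

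\emph{Second step.} Knowing $u_\O=0$ on $\Gamma$, one has $\nabla u_\O = \frac{\de u_\O}{\de\nu}\nu$ on $\Gamma$ (the tangential gradient vanishes since $u_\O$ is constant on $\Gamma$), so $\frac{\de u_\O}{\de\nu}=-c_0<0$ on $\Gamma$ (the sign because $u_\O>0$ inside and $\nu$ is the exterior normal). Thus $u_\O$ solves precisely \eqref{serrintype} with $c=c_0$, in the weak sense, and with the regularity $u_\O\in W^{1,\infty}(\O)\cap W^{2,2}(\O)$ together with the smoothness of $\Gamma$, $\de\Gamma$ and of $\de\Sw\smallsetminus\{O\}$. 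Now invoke Theorem A, which applies since $\Sw$ is convex with $\de\Sw\smallsetminus\{O\}$ smooth: it yields $\O=\Sw\cap B_R(p_0)$ with $R=Nc_0$, $u_\O(x)=\frac{N^2c_0^2-|x-p_0|^2}{2N}$, and the dichotomy that either $p_0=O$, or $p_0\in\de\Sw$ and $\Gamma$ is a half-sphere over a flat portion of $\de\Sw$. This is exactly the statement of the corollary.

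\emph{Main obstacle.} The only delicate point is the passage from the weak solution $u_\O$ of the mixed problem to a solution of \eqref{serrintype} in a form to which Theorem A is applicable: Theorem A is stated for a classical $C^2(\O)\cap C^1(\Gamma\cup\Gamma_1\smallsetminus\{O\})$-solution. One must therefore note that, given the smoothness of $\Gamma$, $\de\Gamma$ and of the cone, together with $u_\O\in W^{1,\infty}(\O)\cap W^{2,2}(\O)$, elliptic regularity bootstraps $u_\O$ to be classical up to $\Gamma$ and $\Gamma_1\smallsetminus\{O\}$ (the weak formulation already encodes the Neumann condition on $\Gamma_1$, and the Dirichlet condition on $\Gamma$ is built in); the overdetermined Neumann datum $\frac{\de u_\O}{\de\nu}=-c_0$ on $\Gamma$ is then the genuine classical normal derivative. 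Everything else is a direct citation of Proposition \ref{domder} and Theorem A; no new computation is required.
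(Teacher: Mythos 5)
Your proposal is correct and follows exactly the route the paper intends: the corollary is stated as an immediate consequence of Proposition \ref{domder} (stationarity $\Leftrightarrow$ $|\nabla u_\O|$ constant on $\Gamma$) combined with Theorem A, and the paper gives no further proof. Your additional details --- the strict positivity of the constant, the identification of $|\nabla u_\O|$ with $-\frac{\de u_\O}{\de\nu}$ on $\Gamma$, and the regularity bootstrap needed to apply Theorem A in its classical formulation --- are correct fillings-in of the same deduction rather than a different argument.
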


We remark that, by a direct computation, one can see that in this situation we have $\mathcal{T}_\o(\Sw\cap B_{R}(O))<\mathcal{T}_\o(\mbox{half-ball})$ (unless $\Sw$ is an half-space). Therefore, we know that $\Sw\cap B_{R}(O)$ are the smooth minimizers for the torsional function under the hypotheses of the previous corollary.

\vskip 0.4cm

We are finally going to prove that in isoperimetric cones (not just in smooth convex cones) we can always characterize the minimum point for $\mathcal{T}_\o$ under the volume constraint (with no additional assumption on the smoothness of the competitor $\O$, nor on the summability of the related $u_\O$).

\begin{theorem}\label{saintV}
Let $\Sw$ be a isoperimetric cone. Then the spherical sector $\Swr$ with $|\Swr|=1$ is the unique minimizer for $\mathcal{T}_\o$ under the volume constraint, i.e. we have
$$\TwO\geq \mathcal{T}_\o(S_\o(\O))\qquad \forall\O\in \mathcal{C}_\o$$
and equality holds if and only if $\O=S_\o(\O)$.
\end{theorem}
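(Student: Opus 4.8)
The plan is to combine the variational characterization \eqref{defT} of $\TwO$ with the $\o$-symmetrization and the Polya--Szego type inequality of Theorem \ref{thPSZ}. Fix $\O\in\mathcal{C}_\o$ and let $u_\O\in\Wud$ be the torsion function, i.e. the unique weak solution of \eqref{mixedtype}; recall that $u_\O>0$ in $\O$ and that, by \eqref{altradefT},
$$\TwO=-\frac{1}{2}\int_\O{u_\O(x)\,{\rm d}x}=-\frac{1}{2}\int_\O{|\nabla u_\O(x)|^2\,{\rm d}x},\qquad \int_\O{u_\O}=\int_\O{|\nabla u_\O|^2},$$
so that in particular $-\frac{\left(\int_\O u_\O\right)^2}{2\int_\O |\nabla u_\O|^2}=\TwO$. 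Since $|\O|=1$, the symmetrized domain $S_\o(\O)$ coincides with the unit-volume spherical sector $\Swr$ appearing in the statement, hence it suffices to prove $\TwO\geq \mathcal{T}_\o(S_\o(\O))$ with equality only if $\O=S_\o(\O)$.

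Next I would symmetrize $u_\O$. Being nonnegative and in $\Wud$, the function $u_\O$ has a well-defined $\o$-symmetrization $u^*_\o$ on $S_\o(\O)$; by the equimeasurability property \eqref{normeLp} we get $\int_{S_\o(\O)}{u^*_\o}=\int_\O{u_\O}$, and Theorem \ref{thPSZ} guarantees both that $u^*_\o\in W_0^{1,2}(S_\o(\O)\cup\tilde{\Gamma}_1)$ --- hence an admissible competitor for the infimum defining $\mathcal{T}_\o(S_\o(\O))$ --- and that $\int_{S_\o(\O)}{|\nabla u^*_\o|^2}\leq\int_\O{|\nabla u_\O|^2}$. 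Plugging $u^*_\o$ into \eqref{defT} for $S_\o(\O)$, and using that $\int_\O u_\O>0$ together with the monotonicity of $t\mapsto -a^2/(2t)$ for $t>0$, I obtain
$$\mathcal{T}_\o(S_\o(\O))\leq -\frac{\left(\int_{S_\o(\O)}{u^*_\o}\right)^2}{2\int_{S_\o(\O)}{|\nabla u^*_\o|^2}}\leq -\frac{\left(\int_\O{u_\O}\right)^2}{2\int_\O{|\nabla u_\O|^2}}=\TwO,$$
which is the claimed inequality.

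For the equality case I would run the chain backwards: if $\TwO=\mathcal{T}_\o(S_\o(\O))$, then both inequalities above must be equalities, and since $\int_\O u_\O>0$ the second one forces $\int_{S_\o(\O)}{|\nabla u^*_\o|^2}=\int_\O{|\nabla u_\O|^2}$, i.e. equality in \eqref{polsz}. By the equality statement in Theorem \ref{thPSZ} this yields $\O=S_\o(\O)$ (and $u_\O=u^*_\o$). The reverse implication is immediate. I do not foresee a genuine obstacle: the only points deserving care are checking that $u_\O$ is an admissible input for Theorem \ref{thPSZ} (it is positive and belongs to $\Wud$), that $S_\o(\O)$ is precisely the unit-volume spherical sector, and that the minus sign in \eqref{defT} is handled with the correct monotonicity --- everything substantive is already encapsulated in Theorem \ref{thPSZ} and in the identities \eqref{altradefT}.
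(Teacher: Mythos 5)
Your proposal is correct and follows essentially the same route as the paper: symmetrize the torsion function $u_\O$, use the equimeasurability \eqref{normeLp} and the Polya--Szego inequality \eqref{polsz} of Theorem \ref{thPSZ}, use $\left(u_\O\right)^*_\o$ as an admissible competitor on $S_\o(\O)$, and settle the equality case via the rigidity statement of Theorem \ref{thPSZ}. The only (inessential) difference is that you work with the Rayleigh-quotient form \eqref{defT} together with the identity $\int_\O u_\O=\int_\O|\nabla u_\O|^2$, whereas the paper argues directly with the energy functional $J$ from \eqref{defTequiv}.
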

\begin{proof}
This is a consequence of Theorem \ref{thPSZ}. As a matter of fact, for any sector-like domain $\O\subset\Sw$ we have
\begin{eqnarray*}
\TwO&=&\frac{1}{2}\int_\O{|\nabla u_\O(x)|^2\,{\rm d}x}-\int_\O{u_\O(x)\,{\rm d}x}\\
&\geq& \frac{1}{2}\int_{S_\o(\O)}{|\nabla \left(u_\O\right)^*_\o(x)|^2\,{\rm d}x} -\int_{S_\o(\O)}{\left(u_\O\right)^*_\o(x)\,{\rm d}x}
\geq \mathcal{T}_\o(S_\o(\O)),
\end{eqnarray*}
where in the first inequality we used \eqref{normeLp} and \eqref{polsz} (respectively with $p=1$ and $p=2$), and in the second inequality we used the fact that $\left(u_\O\right)^*_\o\in W_0^{1,2}(S_\o(\O)\cup\tilde{\Gamma}_1)$ (which is also a byproduct of Theorem \ref{thPSZ}; we recall that $u_\O\geq 0$).\\
The equality case follows from the equality case of \eqref{polsz}.
\end{proof}

\bibliographystyle{amsplain}

\begin{thebibliography}{10}

\bibitem{BF} E. Baer, A. Figalli, \emph{Characterization of isoperimetric sets inside almost-convex cones}. Discrete Contin. Dyn. Syst. 37, 1-14 (2017)

\bibitem{Ba} C. Bandle, Isoperimetric inequalities and applications. Pitman, London, 1980

\bibitem{BZ} J.E. Brothers, W.P. Ziemer, \emph{Minimal rearrangements of Sobolev functions}. J. Reine Angew. Math. 384, 153-179 (1988)

\bibitem{CRS} X. Cabr\'e, X. Ros-Oton, J. Serra, \emph{Sharp isoperimetric inequalities via the ABP method}. J. Eur. Math. Soc. (JEMS) 18, 2971-2998 (2016)

\bibitem{CP} J. Choe, S.-H. Park, \emph{Capillary surfaces in a convex cone}. Math. Z. 267, 875-886 (2011)

\bibitem{CR} G. Ciraolo, A. Roncoroni, \emph{Serrin's type overdetermined problems in convex cones}. ArXiv preprint \verb|1806.08553|

\bibitem{DPM} G. De Philippis, F. Maggi, \emph{Regularity of free boundaries in anisotropic capillarity problems and the validity of Young's law}. Arch. Ration. Mech. Anal. 216, 473-568 (2015)

\bibitem{FI} A. Figalli, E. Indrei, \emph{A sharp stability result for the relative isoperimetric inequality inside convex cones}. J. Geom. Anal. 23, 938-969 (2013)

\bibitem{GX} J. Guo, C. Xia, \emph{A partially overdetermined problem in a half ball}. ArXiv preprint \verb|1807.01507|

\bibitem{HP} A. Henrot, M. Pierre, Variation et optimisation de formes. Math\'{e}matiques \& Applications, Springer, Berlin, 2005

\bibitem{Ke1} S. Kesavan, Topics in functional analysis and applications. John Wiley \& Sons, Inc., New York, 1989

\bibitem{Ke} S. Kesavan, Symmetrization \& applications. Series in Analysis vol. 3, World Scientific Publishing, Hackensack (NJ), 2006  

\bibitem{LS} J. Lamboley, P. Sicbaldi, \emph{New examples of extremal domains for the first eigenvalue of the {L}aplace-{B}eltrami operator in a {R}iemannian manifold with boundary}. Int. Math. Res. Not. IMRN 18, 8752-8798 (2015)

\bibitem{Lee} J.M. Lee, Introduction to Riemannian Manifolds. \emph{Graduate Texts in Mathematics}, Springer International Publishing (2018), reprint of the 1997 edition

\bibitem{LiPaTr} P.L. Lions, F. Pacella, M. Tricarico, \emph{Best constants in {S}obolev inequalities for functions vanishing on some part of the boundary and related questions}. Indiana Univ. Math. J. 37, 301-324 (1988)

\bibitem{LP} P.L. Lions, F. Pacella, \emph{Isoperimetric inequalities for convex cones}. Proc. Amer. Math. Soc. 109, 477-485 (1990)

\bibitem{M} F. Maggi, Sets of finite perimeter and geometric variational problems: an introduction to Geometric Measure Theory. Cambridge Studies in Advanced Mathematics, vol. 135, Cambridge University Press, 2012

\bibitem{PT} F. Pacella, G. Tralli, \emph{Overdetermined problems and constant mean curvature surfaces in cones}. To appear in Rev. Mat. Iberoam. (ArXiv preprint \verb|1802.03197|)

\bibitem{PaTr} F. Pacella, M. Tricarico, \emph{Symmetrization for a class of elliptic equations with mixed boundary conditions}. Atti Sem. Mat. Fis. Univ. Modena 34, 75-93 (1985/86)

\bibitem{PS} G. P\'{o}lya, G. Szeg\"{o}, Isoperimetric {I}nequalities in {M}athematical {P}hysics. Annals of Mathematics Studies, no. 27, Princeton University Press, 1951

\bibitem{RitRos} M. Ritor\'e, C. Rosales, \emph{Existence and characterization of regions minimizing perimeter under a volume constraint inside Euclidean cones}. Trans. Amer. Math. Soc. 356, 4601-4622 (2004)

\bibitem{Sim} J. Simon, \emph{Differentiation with respect to the domain in boundary value problems}. Numer. Funct. Anal. Optim. 2, 649-687 (1980)

\bibitem{SoZo} J. Soko\l owski, J-P. Zol\'esio, Introduction to shape optimization. Springer Series in Computational Mathematics vol. 16, Springer-Verlag, Berlin, 1992 

\bibitem{SZ} P. Sternberg, K. Zumbrun, \emph{A Poincar\'e inequality with applications to volume-constrained area-minimizing surfaces}. J. Reine Angew. Math. 503, 63-85 (1998)

\bibitem{Ta} G. Talenti, \emph{Best constant in Sobolev inequality}. Ann. Mat. Pura Appl. (4) 110, 353-372 (1976)

\end{thebibliography}

\end{document}